\documentclass[11pt, reqno]{article}
\usepackage{amsmath,amssymb,amsfonts,amsthm}
\usepackage{color}

\usepackage[colorlinks=true,linkcolor=blue,citecolor=blue,urlcolor=blue,pdfborder={0 0 0}]{hyperref}
\usepackage{cleveref}

\usepackage{caption}
\usepackage{thmtools}

\usepackage{mathrsfs}

\crefname{theorem}{Theorem}{Theorems}
\crefname{thm}{Theorem}{Theorems}
\crefname{lemma}{Lemma}{Lemmas}
\crefname{lem}{Lemma}{Lemmas}
\crefname{remark}{Remark}{Remarks}
\crefname{prop}{Proposition}{Propositions}
\crefname{defn}{Definition}{Definitions}
\crefname{corollary}{Corollary}{Corollaries}
\crefname{conjecture}{Conjecture}{Conjectures}
\crefname{question}{Question}{Questions}
\crefname{chapter}{Chapter}{Chapters}
\crefname{section}{Section}{Sections}
\crefname{figure}{Figure}{Figures}

\theoremstyle{plain}
\newtheorem{thm}{Theorem}[section]

\newtheorem{lemma}[thm]{Lemma}
\newtheorem{theorem}[thm]{Theorem}

\newtheorem{corollary}[thm]{Corollary}

\newtheorem{prop}[thm]{Proposition}

\theoremstyle{definition}

\theoremstyle{remark}
\newtheorem{remark}[thm]{Remark}

\numberwithin{equation}{section}

\renewcommand{\P}{\mathbb P}
\newcommand{\E}{\mathbb E}

\newcommand{\R}{\mathbb R}
\newcommand{\Z}{\mathbb Z}
\newcommand{\N}{\mathbb N}


\newcommand{\cF}{\mathcal F}
\newcommand{\cG}{\mathcal G}

\newcommand{\cL}{\mathcal L}

\newcommand{\cT}{\mathcal T}
\newcommand{\cU}{\mathcal U}


\newcommand{\sA}{\mathscr A}





\usepackage[margin=1.08in]{geometry}
\usepackage{extarrows}
\usepackage{titling}
\usepackage{commath}
\usepackage{wasysym}
\usepackage{mathtools}
\usepackage{titlesec}

\usepackage{bbm}
\usepackage{setspace}
\setstretch{1}
\usepackage{enumitem}

\usepackage[textsize=tiny]{todonotes}

\usepackage{cite}

\newcommand{\Aut}{\operatorname{Aut}}

\newcommand{\bP}{\mathbf P}
\newcommand{\bE}{\mathbf E}



\newcommand{\myfrac}[3][0pt]{\genfrac{}{}{}{}{\raisebox{#1}{$#2$}}{\raisebox{-#1}{$#3$}}}

\def\P{\mathbb{P}}








\DeclareMathSymbol{\leqslant}{\mathalpha}{AMSa}{"36} 
\DeclareMathSymbol{\geqslant}{\mathalpha}{AMSa}{"3E} 
\DeclareMathSymbol{\eset}{\mathalpha}{AMSb}{"3F}     




\renewcommand{\epsilon}{\varepsilon}









\pretitle{\begin{flushleft}\Large}
\posttitle{\par\end{flushleft}\vskip 0.5em
}
\preauthor{\begin{flushleft}}
\postauthor{
\\
\vspace{0.5em}
\footnotesize{Statslab, DPMMS, University of Cambridge.\\ Email: 
\href{mailto:t.hutchcroft@maths.cam.ac.uk}{t.hutchcroft@maths.cam.ac.uk}}
\end{flushleft}}
\predate{\begin{flushleft}}
\postdate{\par\end{flushleft}}
\title{\bf Non-intersection of transient branching random walks}

\renewenvironment{abstract}
 {\par\noindent\textbf{\abstractname.}\ \ignorespaces}
 {\par\medskip}

\titleformat{\section}
  {\normalfont\large \bf}{\thesection}{0.4em}{}

\author{{\bf Tom Hutchcroft}}

\begin{document}

\date{\small{\today}}

\maketitle

\setstretch{1.1}

\begin{abstract}
Let $G$ be a Cayley graph of a nonamenable group with spectral radius $\rho < 1$. It is known that branching random walk on $G$ with offspring distribution $\mu$ is \emph{transient}, i.e., visits the origin at most finitely often almost surely, if and only if the expected number of offspring $\overline \mu$ satisfies $\overline \mu \leq \rho^{-1}$. Benjamini and M\"uller (\emph{Groups Geom.\ Dyn.}, 2010) conjectured that throughout the transient supercritical phase $1<\overline{\mu} \leq \rho^{-1}$, and in particular at the recurrence threshold $\overline \mu = \rho^{-1}$, the trace of the branching random walk is tree-like in the sense that it is infinitely-ended almost surely on the event that the walk survives forever. This is essentially equivalent to the assertion that two independent copies of the branching random walk intersect at most finitely often almost surely. 
We prove this conjecture, along with several other related conjectures made by the same authors. 

A central contribution of this work is the introduction of the notion of \emph{local unimodularity}, which we expect to have several further applications in the future.



\end{abstract}

\section{Introduction}

Let $G=(V,E)$ be a connected, locally finite graph.
\emph{Branching random walk} on $G$  is a Markov process taking values in the space of finitely-supported functions $V\to \{0,1,2,\ldots\}$, which we think of as encoding the number of particles occupying each vertex of $G$. We begin with a single particle, which occupies some vertex $v$. At every time step, each particle splits into a random number of new particles according to a fixed offspring distribution $\mu$, each of which immediately performs a simple random walk step on $G$.
 Equivalently, branching random walk can be described as a  random walk on $G$ indexed by a Galton-Watson tree \cite{MR1258875,MR1254826}. 
 We say that the offspring distribution $\mu$ is \textbf{non-trivial} if $\mu(1)<1$. It follows from the classical theory of branching processes (see e.g.\ \cite[Chapter 5]{LP:book}) that branching random walk exhibits a \emph{phase transition}: If the mean offspring $\overline{\mu}$ satisfies $\overline{\mu} > 1$ then the process survives forever with positive probability, while if $\mu$ is non-trivial and $\overline{\mu} \leq 1$ then the process survives for only finitely many time steps almost surely.

Beyond its intrinsic appeal and its function as a model for many processes appearing in the natural sciences, branching random walk also attracts attention as a toy model that lends insight into more complex processes. Indeed, many models of statistical mechanics are expected to have \emph{mean-field behaviour} in high dimensions, which roughly means that their behaviour at criticality is similar to that of a critical branching random walk.  Mean-field behaviour has now been proven to hold in high dimensions for percolation \cite{MR1171762,heydenreich2015progress}, the Ising model \cite{MR857063}, the contact process \cite{MR1851386}, uniform spanning trees \cite{hutchcroft2018universality,peres2004scaling}, and the Abelian sandpile model \cite{hutchcroft2018universality}, among other examples. When comparing branching random walk to these models, the main questions of interest often concern the geometric properties of the \emph{trace} of the branching random walk, i.e., the subgraph of $G$ spanned by the set of edges that are ever crossed by some particle.

Although branching random walks have traditionally been studied primarily in the case of Euclidean lattices such as $\Z^d$, it is natural to consider such processes on more general graphs. Recall that a graph $G$ is said to be \textbf{nonamenable} if its \textbf{spectral radius}\footnote{In the abstract, we wrote $\rho$ for the spectral radius to avoid introducing the notation $P$. We will now switch to the notation $\|P\|$ to avoid conflicts with the standard notation for the root of a unimodular random rooted graph.}
\[
\|P\| = \lim_{n\to\infty} p_{2n}(v,v)^{1/2n}
\]
is strictly less than $1$. Here, $p_n(u,v)$ denotes the probability that a simple random walk on $G$ started at $u$ is at $v$ after $n$ steps, and $P(u,v)=p_1(u,v)$ is the associated Markov operator.
See e.g.\ \cite[Chapter 6]{LP:book} for background on amenability and nonamenability. 
Branching random walk is particularly interesting on nonamenable graphs as it exhibits a \emph{double phase transition} \cite{MR1254826,MR2284404,MR2426846}: Suppose that $\mu$ is non-trivial. If $0\leq \overline{\mu}\leq 1$ then the process dies after finite time almost surely, if $1 < \overline{\mu} \leq \|P\|^{-1}$ then the process survives forever with positive probability but does not visit any particular vertex infinitely often almost surely, while if $\overline{\mu}>\|P\|^{-1}$ then the process has a positive probability to return to its starting point infinitely often.
When $G$ is a \emph{transitive} nonamenable graph, such as a  Cayley graph of a nonamenable group, rather more is known: For $\overline{\mu}>\|P\|^{-1}$ the branching random walk visits every vertex infinitely often \emph{almost surely} on the event that it survives forever \cite[Lemma 5.1]{MR1254826}, while for $\overline{\mu}\leq \|P\|^{-1}$ the \emph{expected} number of times the walk returns to the origin is finite \cite[Theorem 7.8]{Woess}. We say that a branching random walk is \textbf{transient} if it visits every vertex at most finitely often almost surely.

 These facts are analogous to the conjectured existence of a non-uniqueness phase for Bernoulli percolation on nonamenable groups \cite{bperc96}. The interested reader is referred to \cite{1804.10191,MR2280297} and references therein for background on this conjecture. Moreover, it is hoped that studying the behaviour of branching random walk at and near the recurrence threshold $\overline{\mu}=\|P\|^{-1}$ will yield insight into the behaviour of percolation at and near the \emph{uniqueness threshold}, $p_u$, a topic that remains very poorly understood in general. Indeed, it would be very interesting to develop a mean-field theory of percolation at $p_u$ and give conditions under which it can be compared, in some sense, to branching random walk at the recurrence threshold. See \cite[Section 6.2]{1804.10191} for potential avenues of  research in this direction. 
 Most existing work regarding branching random walk at the recurrence threshold has focused on the case of trees and Gromov hyperbolic groups, the theory of which is now rather sophisticated \cite{MR1452555,MR3087391,MR3194496,MR1736590}. See also \cite{MR2946082} for some related results on free products.

In \cite{MR2914859}, Benjamini and M\"uller studied the geometry of the trace of the branching random walk in the transient supercritical regime $1<\overline{\mu} \leq \|P\|^{-1}$ on general nonamenable Cayley graphs, and posed a large number of questions about this geometry. One of the most interesting of these questions \cite[Question 4.1]{MR2914859} asked whether the trace of branching random walk  throughout the transient regime is tree-like in the sense that it is infinitely-ended with no isolated ends. Here, we recall that, for $k\in \N \cup \{\infty\}$, an infinite graph is said to be \textbf{$k$-ended} if deleting a finite set of vertices from the graph results in a supremum of $k$ infinite connected components. A graph has \textbf{no isolated ends} if every infinite connected component that remains after the removal of finitely many vertices from the graph is itself infinitely-ended. See e.g.\ \cite[Section 21]{Woess} for a more systematic development of these notions.

Partial progress on this question was made by Gilch and M\"uller \cite{MR3644010}, who studied planar hyperbolic Cayley graphs, and Candellero and Roberts \cite{MR3333735}, who studied graphs satisfying the condition $\sum_{n\geq 1} n \|P\|^{-n}p_n(v,v) < \infty$. 
Both of these results  rely on methods that are quite specific to the examples they treat, and many interesting cases were left open. 
In this paper we resolve the question in full generality.


\begin{theorem}
\label{thm:ends}
Let $G$ be a unimodular transitive graph.
 Let $\mu$ be an offspring distribution with $1<\overline{\mu} \leq \|P\|^{-1}$. Then the trace of a branching random walk on $G$ with offspring distribution $\mu$ is infinitely ended and has no isolated ends almost surely on the event that it survives forever.
\end{theorem}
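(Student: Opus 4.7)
The strategy is to realize the trace $T$ of the branching random walk, rooted at its starting vertex $o$, as a \emph{locally unimodular} random rooted graph on the survival event and then to classify its ends via mass-transport arguments.

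\textbf{Step 1: Local unimodularity of the trace.} The pair $(T,o)$ is not a standard unimodular random rooted graph in the Aldous--Lyons sense, since the starting vertex is distinguished. To remedy this, I would introduce a suitable weighting on vertices of $T$ (morally, related to the expected number of particle visits) and then use the transitivity and unimodularity of the ambient graph $G$, together with the reversibility of simple random walk and the recursive branching structure, to derive a mass-transport principle on $T$. Concretely, for any reasonable diagonally-invariant function $f$ on triples $(T, u, v)$, one should have
\[
\mathbb{E}\biggl[\sum_{v \in T} f(T, o, v)\biggr] = \mathbb{E}\biggl[\sum_{v \in T} f(T, v, o)\biggr],
\]
which is the form of local unimodularity used in the sequel.

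\textbf{Step 2: Ruling out finitely many ends.} Using Step 1, I would show that on survival $T$ cannot have one or two ends. The case of one end is excluded because the hypothesis $\overline\mu \leq \|P\|^{-1}$ gives a finite expected number of visits to each vertex of $G$, so the trace inherits nonamenability from $G$ in an anchored/unimodular sense, and a locally unimodular one-ended graph with this property cannot exist. The case of two ends is excluded since locally unimodular two-ended graphs have sub-exponential volume growth and are amenable. For $3 \leq k < \infty$ ends one applies the classical trifurcation argument: transporting unit mass from each vertex to the finite cut set separating the $k$ ends deposits infinite expected mass on this cut, contradicting Step 1. Hence $T$ has infinitely many ends almost surely on survival.

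\textbf{Step 3: No isolated ends.} Suppose for contradiction that with positive probability there exists a finite set $F \subset T$ whose removal yields an infinite one-ended component $C$. Select such a pair $(F, C)$ in a canonical measurable way, then transport unit mass from each vertex of $C$ to the nearest vertex of $F$; this places infinite expected mass on some vertex of $F$, contradicting the mass-transport principle of Step 1. Thus every end is non-isolated almost surely.

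The main obstacle is Step 1: establishing local unimodularity for a random object built from a process with a distinguished starting vertex is the novel technical contribution, and it is this foundation that makes the otherwise standard end-classification in Steps 2 and 3 available. The other subtlety is a genuine verification of anchored nonamenability of the trace in the critical case $\overline\mu = \|P\|^{-1}$, where transience is marginal and estimates for the expected number of visits must be used efficiently.
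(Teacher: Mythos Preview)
Your Step~1 establishes what the paper calls \emph{quasi-unimodularity} of the trace (its Proposition~2.1), and this is indeed one ingredient. But your use of the phrase ``local unimodularity'' does not match the paper's: the paper's local unimodularity is a new notion for triples $(G,A,\rho)$ with $A$ a distinguished \emph{subset} of vertices, and it is applied not to the trace graph but to subsets of the indexing Galton--Watson tree. This is the central new idea, and your proposal misses it entirely.

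The paper's route to ``infinitely many ends'' is completely different from your Step~2. It first proves the non-intersection theorem (Theorem~1.2): two independent transient BRWs intersect only finitely often. The mechanism is to pull back the trace of one BRW through the other to obtain a set $I$ inside the Galton--Watson tree $T_1$; this set is locally (quasi-)unimodular in the paper's sense, and a tree version of the Benjamini--Schramm \emph{Magic Lemma} then forces $I$ to be finite or to accumulate to at most two ends of $T_1$, the latter being ruled out by the branching structure. Once non-intersection is known, ``infinitely many ends'' is immediate: the sub-BRWs rooted at the generation-$n$ particles have pairwise finite intersection, so removing a finite set separates the trace into at least $M_n$ infinite pieces, and $M_n\to\infty$ on survival.

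Your Step~2, by contrast, has a real gap. You assert that the trace ``inherits nonamenability from $G$ in an anchored/unimodular sense,'' but you give no argument for this, and at the recurrence threshold $\overline{\mu}=\|P\|^{-1}$ it is far from clear: the expected occupation measure is only barely finite, and there is no obvious Cheeger-type bound on the trace. Showing the trace is invariantly nonamenable (equivalently, non-hyperfinite) is essentially equivalent in difficulty to what you are trying to prove; the paper does eventually establish this (Theorem~4.1), but via a coupling-equivalence argument that is separate from, and not used in, the main proof. Your trifurcation argument for $3\le k<\infty$ ends is fine, but the hard cases are one and two ends, and those are exactly where your argument is circular.

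Your Step~3 is correct in spirit: once quasi-unimodularity and ``infinitely many ends'' are in hand, the absence of isolated ends follows from standard results (the paper cites \cite[Proposition~6.10]{AL07}). But note that this already subsumes your $3\le k<\infty$ case as well, so the entire content is in ruling out one and two ends, which is precisely where your proposal lacks a mechanism.
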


Here, \emph{unimodularity} is a technical condition that holds for every Cayley graph of a finitely generated group \cite{MR1082868} and that is introduced in detail in \cref{sec:further}. 

\medskip

We also resolve several further questions raised in \cite{MR2914859} in \cref{sec:further}, namely \cite[Conjecture 4.1, Conjecture 4.2, and Question 4.5]{MR2914859}.

\medskip

We will deduce \cref{thm:ends} as an easy corollary of the following more fundamental theorem concerning the intersection of the traces of two independent branching random walks. 
We use `$\mu$-BRW on $G$' as shorthand for `branching random walk on $G$ with offspring distribution $\mu$'.

\begin{theorem}
\label{thm:nonintersection}
Let $G$ be a unimodular transitive graph. Let $\mu_1,\mu_2$ be non-trivial offspring distributions with $\overline{\mu_1}, \overline{\mu_2} \leq \|P\|^{-1}$, and let $x$ and $y$ be vertices of $G$. Then an independent $\mu_1$-BRW started at $x$ and $\mu_2$-BRW  started at $y$ intersect at most finitely often almost surely.
\end{theorem}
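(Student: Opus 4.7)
The plan is to split into a strictly subcritical case, handled by a first-moment bound, and the critical case, where I would rely on the paper's framework of local unimodularity.

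In the strictly subcritical case $\max(\overline{\mu_1}, \overline{\mu_2}) < \|P\|^{-1}$, write $N_i(v)$ for the total number of particles of BRW $i$ ever at $v$ and $G_{\mu_i}(u,v) := \sum_{n \ge 0} \overline{\mu_i}^n p_n(u,v)$ for the branching Green's function. Since the two BRWs are independent, the total number of intersections satisfies
\[
\E\Bigl[\sum_v N_1(v)\, N_2(v)\Bigr] = \sum_v G_{\mu_1}(x, v)\, G_{\mu_2}(y, v) = \sum_{m, n \ge 0} \overline{\mu_1}^{\,m}\, \overline{\mu_2}^{\,n}\, p_{m+n}(x, y),
\]
using $p_m(x,\cdot)\ast p_n(\cdot,y) = p_{m+n}(x,y)$ and the reversibility of the simple random walk on the transitive graph $G$. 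Combining with the spectral radius bound $p_k(x, y) \le C \|P\|^k$ yields a convergent geometric series whenever $\overline{\mu_i}\, \|P\| < 1$ for each $i$. Hence the number of intersections has finite expectation, and in particular is finite almost surely.

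The critical case $\max(\overline{\mu_1}, \overline{\mu_2}) = \|P\|^{-1}$ is the substantive one: the sum above is now at the boundary of summability and may well be infinite, so a pure first-moment argument cannot succeed. My approach would be to deploy the local unimodularity framework developed earlier in the paper. Specifically, regard the decorated random rooted graph $(G; \phi_1, \phi_2)$, where $\phi_i$ encodes the positions of every particle of the $i$-th BRW (together with its genealogy), as a \emph{locally unimodular} random graph after appropriate re-rooting at a visited vertex. Inside this framework, set up a mass transport in which each intersection vertex distributes mass along a canonical path defined intrinsically in terms of the BRW genealogy — for instance, along the ancestral line up to the root, or by a size-biased choice of particle at that vertex. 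The local unimodular mass transport identity would then equate the expected total outflow with the expected inflow at a typical vertex, so that the existence of infinitely many intersections with positive probability would force an unbounded-in-expectation mass transfer, yielding a contradiction.

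The main obstacle I anticipate is twofold. First, verifying local unimodularity of the two-BRW decoration: the BRWs are initiated at deterministic vertices rather than at an invariant source, and the standard unimodular formalism does not apply directly — this is exactly the gap that the paper's notion of local unimodularity is designed to bridge. Second, identifying the right mass transport: the naive transports that send mass back to $x$ or $y$ violate automorphism invariance and so fail to be admissible, forcing one to define the transport intrinsically using the BRW tree structure and perhaps the Galton--Watson genealogy. I expect these steps to constitute the core technical content, with the strictly subcritical case serving as a warm-up and the critical case being the one where the new machinery is essential.
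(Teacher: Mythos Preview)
Your strictly subcritical argument is correct and matches the paper's Lemma~3.4 (the paper actually proves the slightly stronger statement that one strict inequality suffices, using $\sum_n \|P\|^{-n}p_n(x,y)<\infty$, but your version is enough for what follows).

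In the critical case, however, there is a genuine gap. You correctly locate local unimodularity as the relevant tool, but the plan you sketch --- decorate $G$ with both BRWs and run a mass transport in $G$ from intersection vertices along ancestral lines --- is not how the argument goes, and I do not see how to make such a transport yield a contradiction: the natural candidates either fail invariance (as you note) or simply do not force finiteness, since at criticality there is no quantitative decay to exploit. The paper's route is structurally different and involves three ideas you are missing:
\begin{enumerate}
\item \emph{Work in the indexing tree, not in $G$.} One pushes the trace of $X_2$ forward to a subset of $V(G)$, then pulls it back through $X_1$ to obtain $I=X_1^{-1}\bigl(X_2(V(T_2))\bigr)\subseteq V(T_1)$. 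Proposition~3.1 shows $(T_1,I,o_1)$ is locally quasi-unimodular; the graph $G$ plays no further role.
\item \emph{The Magic Lemma.} The key input is a deterministic combinatorial fact (Theorem~3.4): for any finite set $A$ in a tree, a uniform point of $A$ rarely sees $A$ branching in three directions. Passed through local unimodularity and a limit, this forces any locally unimodular limit of finite-$A$ triples $(T,A,o)$ to have $A$ finite or at most two-ended (Theorem~3.3). To apply this to $I$, one approximates by thinned walks $X_i^p$ with $p<1$, whose intersection sets $I^p$ are finite by your subcritical computation, and takes $p\uparrow 1$.
\item \emph{Ruling out few ends by branching.} Finally, if $I$ were infinite with positive probability, conditioning on an event where both trees have three descendants of the root mapping back to $\rho$ produces three independent sub-BRWs on each side; with positive probability all three pairs intersect infinitely, forcing $I$ to have at least three ends --- contradicting step~2.
\end{enumerate}
None of this is a mass-transport contradiction of the type you describe; the mass-transport principle enters only to convert the Magic Lemma's deterministic bound into the probabilistic end-count of Theorem~3.3.
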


In other words, under the hypothesis of the theorem, there are almost surely at most finitely many vertices of $G$ that are visited by both branching random walks.
  (The result is very easy when the strict inequalities $\overline{\mu_1}, \overline{\mu_2} < \|P\|^{-1}$ hold, see \cref{lem:subcritical}.) 

\medskip

We remark that the study of the intersections of two \emph{simple} random walks is a classical topic, first studied by Erd\"os and Taylor \cite{ErdTay60}, with close connections to the uniform spanning tree. See e.g.\ \cite[Section 10.5]{LP:book} and references therein for more on this topic. See also \cite{MR2308594} for results on the geometry of \emph{simple} random walk traces, and \cite{MR3231996,BJKS08,BeCu12} for results on the geometry of the trace of critical branching random walk conditioned to survive forever on $\Z^d$.

\medskip

\noindent
\textbf{About the proof.}
The proof of \cref{thm:ends,thm:nonintersection} takes a rather different approach than has previously been taken in the literature. A central contribution is the notion of \emph{local unimodularity}, which we introduce in \cref{sec:localunimodularity}. The relevance of this notion to our setting is established by \cref{prop:unimodularpush}, which allows us to `push forward' and `pull back' local unimodularity through tree-indexed walks. We then formulate and prove a version of the \emph{Magic Lemma} of Benjamini and Schramm \cite{BeSc}. Intuitively, this lemma states that for any finite set of vertices $A$ in a tree $T$, the set $A$ `looks like it accumulates to at most two ends of $T$' from the perspective of a uniformly random element of $A$. (The original Magic Lemma concerns finite sets of points in $\R^d$; the statement about trees that we use is closely related and is implicit in the original proof.)
Let $T$ be a Galton-Watson tree, let $X$ be a tree-indexed walk on $G$ indexed by $T$, and let $I$ be the set of vertices of $T$ that get mapped by $X$ into the trace of an independent branching random walk on $G$. 
Using our formulation of the Magic Lemma together with the local unimodularity result \cref{prop:unimodularpush}, we are able to prove that the set $I$ is either finite or accumulates to at most two ends of $T$ almost surely. 
The latter possibility is easily ruled out using the Markovian nature of branching random walk, completing the proof.

\begin{remark}
The proof of \cref{thm:ends,thm:nonintersection} admits various generalizations. For example, one can allow the two branching random walks to have different (possibly long-range) step distributions, provided that both associated transition matrices are symmetric and invariant under the diagonal action of the automorphism group in the sense that $P(\gamma x, \gamma y)=P(x,y)$ for every $x,y \in V$ and $\gamma \in \Aut(G)$. One could also consider branching random walks in random environment, provided that this random environment is almost surely nonamenable, has law invariant under the automorphism group of $G$, and is such that the stationary measure of the root has finite second moment: A simple example is given by assigning  i.i.d.\ random conductances taking values in $[1,2]$ to the edges of $G$. Even more generally, one could consider branching random walks on unimodular random rooted networks that are almost surely nonamenable and for which the conductance of the root has finite second moment. The details of these generalizations are straightforward, and we restrict attention to the above case for clarity of exposition.
\end{remark}

\section{Background on unimodularity}

\label{sec:background}

We now briefly recall the definition of \emph{unimodular random rooted graphs} and some basic facts about them. 
These definitions were first suggested by Benjamini and Schramm \cite{BeSc} and were developed systematically by Aldous and Lyons \cite{AL07}. A detailed and readable introduction can be found in \cite{CurienNotes}. Unimodularity has been found to often lead to surprisingly simple, conceptual, and generalisable solutions to problems that can appear intractable from a  classical perspective, see e.g.\ \cite{BeSc,HutPe2015a,MR3983336,MR3297773,MR3863918}.

 A \textbf{rooted graph} $(g,u)$ is a connected, locally finite graph $g=(V(g),E(g))$ together with a distinguished vertex $u$, the root. (We will often use the convention of using lower case letters for deterministic rooted graphs and upper case letters for random rooted graphs.) An isomorphism of graphs is an isomorphism of rooted graphs if it preserves the root. 
We denote the space of isomorphism classes of rooted graphs by $\cG_\bullet$. 
(We will ignore the distinction between a rooted graph and its isomorphism class when this does not cause confusion.)
This space carries a natural topology, known as the \emph{local topology}, in which two rooted graphs are close if there exist large balls around their respective roots that are isomorphic as rooted graphs. A \emph{doubly-rooted} graph is a connected, locally finite graph together with an \emph{ordered pair} of distinguished vertices. The space of isomorphism classes of doubly-rooted graphs $\cG_{\bullet\bullet}$ and the local topology on this space are defined similarly to the singly-rooted case. Both $\cG_\bullet$ and $\cG_{\bullet\bullet}$ are Polish spaces. See \cite[Section 2.1]{CurienNotes} for details. We write $\cT_\bullet$ and $\cT_{\bullet\bullet}$ for the closed subspaces of $\cG_\bullet$ and $\cG_{\bullet\bullet}$ in which the underlying graph is a tree.

We call a random variable taking values in $\cG_\bullet$ a \emph{random rooted graph}. A random rooted graph $(G,\rho)$ with vertex set $V$ is said to be \emph{unimodular} if it satisfies the \textbf{mass-transport principle}, which states that
\begin{equation}
\label{eq:MTP}
\E\left[ \sum_{v\in V} F(G,\rho,v) \right] = \E\left[ \sum_{v\in V} F(G,v,\rho) \right] 
\end{equation}
for every measurable function $F:\cG_{\bullet\bullet}\to[0,\infty]$. We call a \emph{probability measure} $\mu$ on $\cG_\bullet$ unimodular if a random rooted graph with law $\mu$ is unimodular. The set $\cU(\cG_\bullet)$ of unimodular probability measures on $\cG_\bullet$ is a weakly closed, convex subset of the space of all probability measures on $\cG_\bullet$ \cite[Theorem 8]{CurienNotes}. We think of $F$ as a rule for sending a non-negative amount of mass $F(G,u,v)$ from $u$ to $v$: the mass-transport principle states that the expected amount of mass the root receives is equal to the expected amount of mass it sends out. Intuitively, $(G,\rho)$ is unimodular if the root $\rho$ is `uniformly distributed on the vertex set of $G$'. Although this statement cannot be interpreted literally when $G$ is infinite, it remains very useful as a heuristic.


 A transitive graph $G$ is said to be unimodular if $(G,\rho)$ is a unimodular random rooted graph whenever $\rho$ is an arbitrarily chosen root vertex of $G$. Every amenable transitive graph and every Cayley graph of a finitely generated group is unimodular \cite{MR1082868}. 


It will be convenient for us to introduce the following more general notion.
 We say that a random rooted graph $(G,\rho)$ is \textbf{quasi-unimodular} if there exists a measurable function $W:\cG_\bullet \to (0,\infty)$ such that $\E[W(G,\rho)]=1$ and
\[
\E\left[W(G,\rho)\sum_{v\in V} F(G,\rho,v)\right]
=
\E\left[W(G,\rho)\sum_{v\in V} F(G,v,\rho)\right]
\]
for every measurable function $F:\cG_{\bullet\bullet} \to [0,\infty]$; in this case we say that $(G,\rho)$ is quasi-unimodular with weight $W$. 
Equivalently, $(G,\rho)$ is quasi-unimodular if and only if there exists a  unimodular random rooted graph  $(G',\rho')$ whose law is equivalent to that of $(G,\rho)$ in the sense that both measures are absolutely continuous with respect to each other: the weight $W$ is the Radon-Nikodym derivative of the law of $(G',\rho')$ with respect to the law of $(G,\rho)$. Thus, for most qualitative purposes, being quasi-unimodular is just as good as being unimodular.

\begin{remark}
\label{remark:W_uniqueness}
A notion closely related to that of quasi-unimodularity is studied  under the name \emph{unimodularizability} by Khezeli \cite{MR3880017}, who shows in particular that the weight $W$ is unique up to a factor that depends only on the invariant $\sigma$-algebra \cite[Theorem 3]{MR3880017}. In particular, the weight $W$ is unique (up to a.e.-equivalence) if $(G,\rho)$ is ergodic. We will not require this result.
\end{remark}


The following proposition allows us to obtain new quasi-unimodular random rooted graphs as traces of unimodular random rooted trees. See \cite{MR1258875} for detailed definitions of Markov chains indexed by trees. We say that a tree-indexed walk is \textbf{transient} if it visits every vertex at most finitely often almost surely.
Here and elsewhere, we write either $\deg_G(v)=\deg(v)$ for the degree of a vertex $v$ in the graph $G$, using the subscript only if the choice of graph is ambiguous. 
Recall that $\operatorname{Tr}(X)$ is defined to be the subgraph of $G$ spanned by every edge that is ever crossed by $X$.

\begin{prop}
\label{prop:unimodular_trace}
Let $(G,\rho)$ be a unimodular random rooted graph, and let $(T,o)$ be an independent unimodular random rooted tree. Let $X$ be a $T$-indexed walk in $G$ with $X(o)=\rho$, and let $\operatorname{Tr}(X)$ be the trace of $X$. Suppose that $X$ is almost surely transient and that the integrability assumption $\E[\deg_G(\rho)(\#X^{-1}(\rho))^{-1}]<\infty$ holds.  Then $(\operatorname{Tr}(X),\rho)$ is quasi-unimodular with weight
\[
W(\operatorname{Tr}(X),\rho) = \myfrac[0.35em]{\E\left[\deg_G(\rho)(\#X^{-1}(\rho))^{-1} \mid (\operatorname{Tr}(X),\rho)\right]}{\E\left[\deg_G(\rho)(\#X^{-1}(\rho))^{-1}\right]}.
\]
\end{prop}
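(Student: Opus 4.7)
The plan is to verify quasi-unimodularity directly by combining the mass-transport principles for the two independent unimodular objects $(G,\rho)$ and $(T,o)$ with the reversibility of simple random walk on $G$ with respect to the degree measure. First I would unwind the definition: since $\sum_{v} F(\operatorname{Tr}(X),\rho,v)$ is measurable with respect to $(\operatorname{Tr}(X),\rho)$, multiplying the required mass-transport identity through by $Z:=\E[\deg_G(\rho)/\#X^{-1}(\rho)]$ and applying the tower property reduces the claim to showing that
\[
\E\!\left[\frac{\deg_G(\rho)}{\#X^{-1}(\rho)}\sum_{v} F(\operatorname{Tr}(X),\rho,v)\right]=\E\!\left[\frac{\deg_G(\rho)}{\#X^{-1}(\rho)}\sum_{v} F(\operatorname{Tr}(X),v,\rho)\right]
\]
for every measurable $F:\cG_{\bullet\bullet}\to[0,\infty]$.

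Next I would encode the trace structure as a mass-transport rule on the unimodular random rooted graph $(G,\rho)$. For a deterministic rooted graph $(g,u)\in\cG_\bullet$ and a second vertex $v\in V(g)$, set
\[
H(g,u,v):=\deg_g(u)\,\E\!\left[\Ind{v\in V(\operatorname{Tr}(X))}\,\frac{F(\operatorname{Tr}(X),u,v)}{\#X^{-1}(u)}\right],
\]
where the inner expectation is over an independent unimodular random rooted tree $(T,o)$ together with a $T$-indexed simple random walk $X$ on $g$ with $X(o)=u$. Then $H$ is a well-defined measurable $[0,\infty]$-valued function on $\cG_{\bullet\bullet}$, and by Fubini $\E[\sum_{v\in V(G)}H(G,\rho,v)]$ is exactly the left-hand side of the reduced identity above. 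Invoking the mass-transport principle for $(G,\rho)$ thus reduces the proof to establishing that $\E[\sum_{v\in V(G)}H(G,v,\rho)]$ equals the corresponding right-hand side.

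The final and most intricate step, which I expect to be the main obstacle, is to match this last expression to the right-hand side using reversibility. For a fixed tree $T$ and starting vertex $u\in V(G)$, the distribution of $X$ given $X(o)=u$ has density $\mu(\psi)/\deg_G(u)$ on walks $\psi:V(T)\to V(G)$, where $\mu(\psi)$ is a root-symmetric functional: the detailed-balance identity $\deg_G(x)P(x,y)=\deg_G(y)P(y,x)$ guarantees that re-rooting $T$ at any vertex $s$ and re-orienting the edges on the $o$-to-$s$ path modifies the density by exactly a factor of $\deg_G(X(o))/\deg_G(X(s))$, so that $\mu(\psi)=\deg_G(\psi(s))\cdot(\text{density with root }s,\,X(s)=\psi(s))$. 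Plugging this representation into $\sum_v H(G,v,\rho)$, the factor $\deg_G(v)$ cancels the pinning weight $1/\deg_G(v)$ and the sum over $v$ collapses via the identification $\psi(o)=v$; after then rewriting $\Ind{\rho\in V(\operatorname{Tr}(\psi))}=\sum_s\Ind{\psi(s)=\rho}/\#\psi^{-1}(\rho)$ (with the convention $0/0=0$), a further application of the mass-transport principle for the unimodular tree $(T,o)$ swaps the roles of $o$ and $s$; running the reversibility identity in reverse converts the resulting object back into the original walk $X$ rooted at $o$ with $X(o)=\rho$, and the surviving sum telescopes to $\sum_{v\in V(\operatorname{Tr}(X))}F(\operatorname{Tr}(X),v,\rho)$ after cancellation of the $\#X^{-1}(v)$ factors, producing the desired right-hand side. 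The transience hypothesis together with the integrability assumption $\E[\deg_G(\rho)/\#X^{-1}(\rho)]<\infty$ ensures every quantity appearing in this argument is almost surely finite and that $W$ is a well-defined probability weight.
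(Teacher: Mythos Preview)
Your proposal is correct and follows essentially the same route as the paper. The paper actually omits the proof of this proposition, stating only that it is ``very similar to that of item 2 of \cref{prop:unimodularpush}''; your argument uses exactly the same three ingredients as that proof---the mass-transport principle for $(G,\rho)$, the mass-transport principle for $(T,o)$, and the time-reversal/reversibility identity $\deg_G(x)P(x,y)=\deg_G(y)P(y,x)$ for the walk---just with the roles of the inner and outer expectations interchanged (you put the $T$-expectation inside your transport function $H$, whereas the paper's proof of item~2 puts the $G$-expectation inside its function $f$).
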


\medskip

The proof of this proposition is very similar to that of item 2 of \cref{prop:unimodularpush}, below, and is omitted.
(Note that \cref{prop:unimodular_trace} is not actually required for the proofs of \cref{thm:ends,thm:nonintersection}, but will be used in \cref{sec:further}.)

\medskip

Let us now discuss how this applies to branching random walk.
Galton-Watson trees as they are usually defined are not unimodular random rooted graphs, since the root has a special role. This can be remedied as follows. Let $\mu$ be an offspring distribution. Let $(T_1,o)$ and $(T_2,o')$ be independent Galton-Watson trees, each with offspring distribution $\mu$. Let $(T,o)$ be the rooted tree formed from $(T_1,o)$ and $(T_2,o')$ by attaching $o$ to $o'$ via a single edge. The random rooted tree $(T,o)$ is referred to as an \textbf{augmented Galton-Watson tree}, and was first considered by Lyons, Pemantle, and Peres \cite{MR1336708}. The augmented Galton-Watson tree $(T,o)$ is not  unimodular in general either, but it is quasi-unimodular with weight $\deg(o)^{-1}\E[\deg(o)^{-1}]^{-1}$ (equivalently, it is a \emph{reversible} random rooted graph). 
See \cite[Example 1.1]{AL07} for further discussion. 
We refer to a unimodular random tree $(T',o')$ whose law is obtained by biasing the law of the augmented Galton-Watson tree $(T,o)$ by $\deg(o)^{-1}$ as a \textbf{unimodular Galton-Watson tree} with offspring distribution $\mu$, and refer to the walk indexed by a unimodular Galton-Watson tree with offspring distribution $\mu$ as a \textbf{unimodular branching random walk} with offspring distribution $\mu$. Thus, in particular, \cref{prop:unimodular_trace} implies that the trace of a unimodular branching random walk on a Cayley graph is quasi-unimodular\footnote{\cite[Theorem 3.7]{MR2914859} states that this trace is unimodular, rather than quasi-unimodular; this appears to be a mistake.} with weight $\E\left[(\#X^{-1}(\rho))^{-1}\right]^{-1}\E\left[(\#X^{-1}(\rho))^{-1} \mid (\operatorname{Tr}(X),\rho)\right]$. 

\medskip

(The fact that this is the correct weight becomes intuitively clear if we think in terms of the uniformity of the root: If $f: A \to B$ is a surjective function between finite sets, and $X$ is a uniform random element of $A$, then $\P(f(X)=b) = \#f^{-1}(b)/ \# A$ for each $b\in B$. If we want to obtain a uniform measure on $B$, we should therefore bias the law of $f(X)$ by $(\#f^{-1}(f(X)))^{-1}$.)

\section{Proof of the main theorems}

\subsection{Local unimodularity}
\label{sec:localunimodularity}

We now introduce the notion of \emph{local unimodularity}. This definition plays a central role in our proofs, and we expect that it will have several further applications in the future.


We define $\cG_\bullet^\diamond$ 
to be the space of isomorphism classes of triples $(g,a,u)$, where $(g,u)$ is a rooted graph and $a$ is a distinguished set of vertices of $g$ (this notation is not standard). The local topology on $\cG_\bullet^\diamond$ is defined in an analogous way to that on $\cG_\bullet$, so that $(g,a,u)$ and $(g',a',u')$ are close in the local topology if there exists a large $r$ and an isomorphism of rooted graphs $\phi$ from the $r$-ball around $u$ in $g$ to the $r$-ball around $u'$ in $g'$ such that the intersection of $a'$ with the $r$-ball around $u'$ is equal to the image under $\phi$ of the restriction of $a$ to the $r$-ball around $u$. The doubly rooted space $\cG_{\bullet\bullet}^\diamond$ and the local topology on this space are defined analogously. It follows by a similar argument to that of \cite[Theorem 2]{CurienNotes} that $\cG_\bullet^\diamond$ and $\cG_{\bullet\bullet}^\diamond$ are Polish spaces. We write $\cT_\bullet^\diamond$ and $\cT_{\bullet\bullet}^\diamond$ for the closed subspaces of $\cG_\bullet^\diamond$ and $\cG_{\bullet\bullet}^\diamond$ in which the underlying graph is a tree.

We say that a random variable $(G,A,\rho)$ taking values in $\cG_\bullet^\diamond$ is \textbf{locally unimodular} if $\rho \in A$ almost surely and
\[
\E\left[\sum_{v\in A} F(G,A,\rho,v)\right]
=
\E\left[\sum_{v\in A} F(G,A,v,\rho)\right]
\]
for every measurable function $F : \cG_{\bullet\bullet}^\diamond \to [0,\infty]$.  (Note that the first condition is in fact redundant, being implied by the second.)  We say that a \emph{probability measure} $\mu$ on $\cG_\bullet^\diamond$ is locally unimodular if a random variable with law $\mu$ is locally unimodular. We write $\cL(\cG_\bullet^\diamond)$ for the space of locally unimodular probability measures on $\cG_\bullet^\diamond$ with the weak topology. 


For example, if $(G,\rho)$ is a unimodular random rooted graph and $\omega$ is a unimodular percolation process on $G$ (i.e., $\omega$ is a random subgraph of $G$ such that $(G,\omega,\rho)$ is unimodular in an appropriate sense) and $K_\rho$ is the component of $\rho$ in $\omega$ then $(G,K_\rho,\rho)$ is locally unimodular. We stress however that locally unimodular random rooted graphs need not arise this way, and indeed that the set $A$ need not be connected. For example, if $G$ is an arbitrary connected, locally finite graph, $A$ is an arbitrary finite set of vertices of $G$, and $\rho$ is chosen uniformly at random from among the vertices of $A$ then the triple $(G,A,\rho)$ is locally unimodular. More generally, we have the intuition that $(G,A,\rho)$ is locally unimodular if and only if $\rho$ is `uniformly distributed on $A$'. (Of course, this intuitive definition does not make formal sense when $A$ is infinite.) 


It follows by a similar argument to \cite[Theorem 8]{CurienNotes} that $\cL(\cG_\bullet^\diamond)$ is a closed subset of the space of all probability measures on $\cG_\bullet^\diamond$ with respect to the weak topology. Thus, if $(G_n,A_n,\rho_n)$ is a sequence of locally unimodular $\cG_\bullet^\diamond$ random variables converging in distribution to $(G,A,\rho)$, then $(G,A,\rho)$ is also locally unimodular.







As before, 
 it will be convenient for us to introduce the following more general notion. 
 We say that a random variable $(G,A,\rho)$ taking values in $\cG_\bullet^\diamond$ is \textbf{locally quasi-unimodular} if there exists a measurable function $W:\cG_\bullet^\diamond\to (0,\infty)$ such that $\E[W(G,A,\rho)]=1$ and
\[
\E\left[W(G,A,\rho)\sum_{v\in A} F(G,A,\rho,v)\right]
=
\E\left[W(G,A,\rho)\sum_{v\in A} F(G,A,v,\rho)\right]
\]
for every measurable function $F:\cG_{\bullet\bullet}^\diamond\to [0,\infty]$; in this case we say that $(G,A,\rho)$ is locally quasi-unimodular with weight $W$.
Equivalently, $(G,A,\rho)$ is locally quasi-unimodular if and only if there exists a locally unimodular  $(G',A',\rho')$ whose law is equivalent to that of $(G,A,\rho)$ in the sense that both measures are absolutely continuous with respect to each other; the weight $W$ is the Radon-Nikodym derivative of the law of $(G',A',\rho')$ with respect to the law of $(G,A,\rho)$. (We expect that the weight $W$ has similar uniqueness properties to those discussed in \cref{remark:W_uniqueness}. We do not pursue this here.)


Our interest in these notions owes to the following proposition, which gives conditions under which local unimodularity can be \emph{pulled back} or \emph{pushed forward} through a unimodular tree-indexed random walk.

\begin{prop}[Local unimodularity via tree-indexed walks]
\label{prop:unimodularpush}
\hspace{1cm}
\begin{enumerate}
 \item \textbf{Pull-back.}  
 Let $(G,A,o)$ be a locally unimodular random rooted graph and let $(T,o)$ be an independent unimodular random rooted tree. Let $X$ be a $T$-indexed random walk on $G$ with $X(o)=\rho$. If $\E[\deg_G(\rho)] < \infty$ then $(T,X^{-1}(A),o)$ is locally quasi-unimodular with weight

 \[ W\bigl(T,X^{-1}(A),o\bigr):=\myfrac[0.4em]{\E\left[\deg_G(\rho)\mid \bigl(T,X^{-1}(A),o\bigr)\right]}{\E\left[\deg_G(\rho)\right]}. \]
 \item \textbf{Push-forward.} 
 Let $(G,o)$ be a unimodular random rooted graph and let $(T,A,o)$ be an independent locally unimodular random rooted tree. Let $X$ be a $T$-indexed random walk on $G$ with $X(o)=\rho$. If $X$ is transient almost surely and $\E[\deg_G(\rho)(\# X^{-1}(\rho))^{-1}] < \infty$ then $(G,X(A),\rho)$ is locally quasi-unimodular with weight
%
%
%
 \[
W(G,X(A),\rho):= \myfrac[0.3em]{\E\left[\deg_G(\rho)(\#X^{-1}(\rho))^{-1} \mid (G,X(A),\rho)\right]}{\E\left[\deg_G(\rho)(\# X^{-1}(\rho))^{-1}\right]}.
 \]
\end{enumerate}
\end{prop}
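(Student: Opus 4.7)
The plan is to prove both items by applying the mass-transport principle on one side (the graph in item~1, the tree in item~2) and then using a reversibility identity for tree-indexed walks together with the unimodularity of the other side to translate the resulting identity into the desired form. The key technical ingredient is a tree-indexed generalization of the reversibility of simple random walk: if $f : V(t) \to V(g)$ is any realization of a $t$-indexed walk on a graph $g$, then the weighted density $\deg_g(f(u)) \cdot \pi_u(f)$ is independent of the choice of root $u \in V(t)$, where $\pi_u(f)$ denotes the joint density of the walk started at $u$ (the product over edges of $t$, oriented away from $u$, of the transition probabilities $p(f(\cdot),f(\cdot))$). This follows from the classical reversibility identity $\deg_g(x)p(x,y) = \deg_g(y)p(y,x)$ applied telescopically along the unique path in $t$ between two candidate roots.

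For item~1 (Pull-back), the plan is to transport mass on the graph side. Define $\Phi_G : \cG_{\bullet\bullet}^\diamond \to [0,\infty]$ by
\[
\Phi_G(g, a, x, y) := \deg_g(x) \, \E\!\left[\,\sum_{v \in V(T):\, X(v) = y} F(T, X^{-1}(a), o, v) \;\middle|\; G = g,\, A = a,\, X(o) = x\,\right],
\]
where the inner expectation is over an independent unimodular rooted tree $(T,o)$ together with the $T$-indexed walk with $X(o) = x$. A direct computation (swapping the order of the two expectations and grouping $v \in X^{-1}(A)$ by $y = X(v) \in A$) shows that $\E[\sum_{y \in A} \Phi_G(G, A, \rho, y)]$ equals the target LHS. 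Applying the local unimodularity of $(G, A, \rho)$ then yields $\E[\sum_y \Phi_G(G, A, \rho, y)] = \E[\sum_y \Phi_G(G, A, y, \rho)]$; the new expression involves an inner expectation whose walk starts at $y$ rather than at $\rho$. The tree-indexed reversibility identity rewrites this as an expectation over the walk with $X(o) = \rho$, at the cost of permuting the roles of $o$ and $v$ inside $F$. A final application of the unimodularity of $(T, o)$ to swap these two vertices in the tree-indexed mass transport produces the target RHS $\E[\deg_G(\rho) \sum_{v \in X^{-1}(A)} F(T, X^{-1}(A), v, o)]$.

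Item~2 (Push-forward) is handled by the dual strategy of transporting mass on the tree side. Define a rule $\Phi_T : \cT_{\bullet\bullet}^\diamond \to [0,\infty]$ that on a doubly-rooted tree with distinguished set $(t, a, u_1, u_2)$ averages over an independent unimodular $(G, \rho)$ and the $t$-indexed walk with $X(u_1) = \rho$; the integrand is the product of the stated weight $\deg_G(\rho)(\#X^{-1}(\rho))^{-1}$ with a multiplicity correction of the form $(\#(a \cap X^{-1}(X(u_2))))^{-1}$, which converts the sum over $v \in A$ appearing in the tree-side MTP into the sum over $y \in X(A)$ appearing in the target statement, together with $F(G, X(a), X(u_1), X(u_2)) \mathbf{1}[u_2 \in a]$. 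By construction the LHS of the tree-MTP equation for $\Phi_T$ reproduces the target LHS. For the RHS the walk starts at $v$ rather than at $o$; applying tree-indexed reversibility together with the unimodularity of $(G,\rho)$ (the step dual to the one used in item~1) converts this back to an expectation involving the walk with $X(o) = \rho$, and reorganizing via the identity $\sum_{v \in A}(\#(A \cap X^{-1}(X(v))))^{-1} h(X(v)) = \sum_{y \in X(A)} h(y)$ produces the target RHS. The transience hypothesis guarantees $\#X^{-1}(\rho) < \infty$ almost surely so that the weight is well-defined, and the moment hypothesis ensures its integrability.

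The main obstacle is bookkeeping: one must simultaneously coordinate (i) a mass-transport swap on one side, (ii) the walk-reversibility identity along the path connecting the two candidate roots, and (iii) the multiplicity corrections needed to pass between sums over $A$ or $X^{-1}(A)$ and sums over $X(A)$ or $V(\operatorname{Tr}(X))$. Identifying the correct transport rules $\Phi_G$ and $\Phi_T$ is the conceptual step; once these are in hand, the identities fall out of a somewhat lengthy but otherwise direct computation.
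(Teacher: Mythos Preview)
Your overall strategy coincides with the paper's: establish a reversibility identity for tree-indexed walks (the paper phrases this as the pair $\deg(x)\,\bP_{u,x}^{t,g}(X(v)=y)=\deg(y)\,\bP_{v,y}^{t,g}(X(u)=x)$ together with equality of the conditional laws of $X$ given $\{X(u)=x,X(v)=y\}$), then combine a mass-transport swap on one side with reversibility and the mass-transport principle on the other. Your item~1 is correct and matches the paper's argument; the only cosmetic difference is the order in which the tree MTP and the graph MTP are invoked.

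For item~2 there is a genuine slip in your choice of multiplicity correction. Your integrand carries the \emph{asymmetric} pair of factors $(\#X^{-1}(X(u_1)))^{-1}$ and $(\#(A\cap X^{-1}(X(u_2))))^{-1}$. After the tree MTP you have $u_1=v$, $u_2=o$; the reversibility/graph-MTP step then shifts the starting vertex of the walk from $v$ to $o$ but leaves the functional of $X$ untouched, so on the swapped side the summand in $\sum_{v\in A}$ carries $(\#X^{-1}(X(v)))^{-1}$, not $(\#(A\cap X^{-1}(X(v))))^{-1}$, and the prefactor is $(\#(A\cap X^{-1}(\rho)))^{-1}$ rather than $(\#X^{-1}(\rho))^{-1}$. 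Your regrouping identity $\sum_{v\in A}(\#(A\cap X^{-1}(X(v))))^{-1}h(X(v))=\sum_{y\in X(A)}h(y)$ therefore does not apply on that side; what actually emerges is $\sum_{y\in X(A)}\frac{|A\cap X^{-1}(y)|}{|X^{-1}(y)|}\,F(G,X(A),y,\rho)$, which is not the target RHS. The paper instead builds the \emph{symmetric} correction $|X^{-1}(\rho)|^{-1}|X^{-1}(y)|^{-1}$ into its transport function $f(t,a,u,v)$, so that the two factors simply exchange roles under the swap. (In the case $A=V(T)$ --- the only case actually used later in the paper --- your asymmetric correction and the paper's symmetric one coincide, and both arguments go through cleanly.)
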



\begin{proof}[Proof of \cref{prop:unimodularpush}]

For each $(g,x) \in \cG_{\bullet}$ and $(t,u) \in \cT_\bullet$ we let $\bP_{u,x}^{t,g}$ and $\bE_{u,x}^{t,g}$ denote probabilities and expectations taken with respect to the law of a $t$-indexed random walk $X$ on $g$ started with $X(u)=x$, which we consider to be a random graph homomorphism from $t$ to $g$.
Observe that tree-indexed random walk has the following time-reversal property: If $(g,x,y) \in \cG_{\bullet\bullet}$ and $(t,u,v) \in \cT_{\bullet \bullet}$, then we have that
\begin{align}
\label{eq:timereversal1}
\deg(x) \bP_{u,x}^{t,g}\bigl(X(v)=y\bigr) &= \deg(y) \bP_{v,y}^{t,g}\bigl(X(u)=x\bigr)
\intertext{and that}
\label{eq:timereversal2}
\bP_{u,x}^{t,g}\bigl(X \in \sA \mid X(v) = y\bigr) &= \bP_{y,x}^{t,g}\bigl(X \in \sA \mid X(u) = x\bigr)
\end{align}
for every event $\sA$. That is, the conditional distribution of $X$ given $\{ X(u)=x, X(v)=y\}$ is the same under the two measures $\bP_{u,x}^{t,g}$ and $\bP_{v,y}^{t,g}$.
Both statements follow immediately from the analogous statements for \emph{simple} random walk, which are classical. Indeed, $\bP_{u,x}^{t,g}\bigl(X(v)=y\bigr)$ is equal to $p_{d(u,v)}(x,y)$, so that
\eqref{eq:timereversal1} follows from the standard time-reversal identity $\deg(x) p_n(x,y) = \deg(y) p_n(y,x)$. 
To prove \eqref{eq:timereversal2}, observe that, under both measures, the conditional distribution of $X$ given $X(u)=x$ and $X(v)=y$ is given by taking the restriction of $X$ to the geodesic connecting $u$ and $v$ in $T$ to be a uniformly random path of length $d(u,v)$ from $x$ to $y$ in $G$, and then extending $X$ to the rest of $T$ in the natural Markovian fashion.
\medskip

\noindent \textbf{Proof of item 1.}
Write $\E_G$ for expectations taken with respect to  $(G,A,\rho)$ and $\E_T$ for expectations taken with respect to $(T,o)$.
Let $F: \cG_{\bullet\bullet}^\diamond \to [0,\infty]$ be measurable, and define $f: \cG_{\bullet\bullet}^\diamond \to [0,\infty]$ by
\begin{align}
f(g,a,x,y) &= \E_T\left[\sum_{v \in V(T)}\bE_{o,x}^{T,g}\left[  F\bigl(T,X^{-1}(a),o,v\bigr) \mathbbm{1}\bigl(v \in X^{-1}(y)\bigr) \right] \right].
\nonumber
\intertext{Observe that we can equivalently write $f$ as}
f(g,a,x,y)  &=
\E_T\left[\sum_{v \in V(T)}\bE_{v,x}^{T,g}\left[  F\bigl(T,X^{-1}(a),v,o\bigr) \mathbbm{1}\bigl(o \in X^{-1}(y)\bigr) \right] \right]
\nonumber
\\
&=\frac{\deg(y)}{\deg(x)}\E_T\left[\sum_{v \in V(T)}\bE_{o,y}^{T,g}\left[  F\bigl(T,X^{-1}(a),v,o\bigr) \mathbbm{1}\bigl(v \in X^{-1}(x)\bigr) \right] \right]
\label{eq:pullback}
\end{align}
where the first equality follows from the mass-transport principle for $(T,o)$ and the second follows from the time-reversal identities \eqref{eq:timereversal1} and \eqref{eq:timereversal2}.
On the other hand, we have that
\begin{multline*}
\E\left[  \sum_{v \in X^{-1}(A)} \deg(\rho) F\bigl(T,X^{-1}(A),o,v\bigr) \right]
\\=
 \E_G\left[  \sum_{y \in A} \deg(\rho) f(G,A,\rho,y) \right]
=
\E_G\left[   \sum_{y \in A} \deg(y)f(G,A,y,\rho) \right],\end{multline*}
where the first equality is by definition and the second follows from the mass-transport principle for $(G,A,\rho)$.
Applying \eqref{eq:pullback} we deduce that
\begin{multline*}
\E\left[  \sum_{v \in X^{-1}(A)} \deg(\rho) F\bigl(T,X^{-1}(A),o,v\bigr) \right]\\
=\E_G \left[  \sum_{y\in A}\deg(\rho)\E_T\left[\sum_{v \in V(T)}\bE_{o,\rho}^{T,g}\left[  F\bigl(T,X^{-1}(A),v,o\bigr) \mathbbm{1}\bigl(v \in X^{-1}(y)\bigr) \right] \right]\right]\\
=
\E\left[  \sum_{v \in X^{-1}(A)} \deg(\rho) F\bigl(T,X^{-1}(A),v,o\bigr) \right].
\end{multline*}
Since the measurable function $F:\cG_{\bullet\bullet}^\diamond\to[0,\infty]$ was arbitrary, this concludes the proof. 


\medskip

\noindent
\textbf{Proof of item 2.} 
Write $\E_G$ for expectations taken with respect to  $(G,\rho)$ and $\E_T$ for expectations taken with respect to $(T,A,o)$.
 Let $F:\cG_{\bullet\bullet}^\diamond\to[0,\infty]$ be measurable, and for each $(t,a,u,v) \in \cT_{\bullet\bullet}^\diamond$, define
\begin{align}
f(t,a,u,v) &= \E_G\left[ \sum_{y \in V(G)}  \deg(\rho) \bE_{u,\rho}^{t,G} \left[|X^{-1}(\rho)|^{-1} |X^{-1}(y)|^{-1} F(G,X(a),\rho,y) \mathbbm{1}(X(v)=y) \right]\right]
\nonumber
\\
&= \E_G\left[ \sum_{y \in V(G)}  \deg(y)\bE_{u,y}^{t,G} \left[ |X^{-1}(\rho)|^{-1} |X^{-1}(y)|^{-1}F(G,X(a),y,\rho) \mathbbm{1}(X(v)=\rho) \right]\right]
\nonumber
\\
&= \E_G\left[  \sum_{y \in V(G)}\deg(\rho) \bE_{v,\rho}^{t,G} \left[ |X^{-1}(\rho)|^{-1} |X^{-1}(y)|^{-1}F(G,X(a),y,\rho) \mathbbm{1}(X(u)=y) \right]\right],
\label{eq:pushforward}
\end{align}
where, as before, the first equality follows from the mass-transport principle for $(G,\rho)$ and the second inequality follows from the time-reversal identities \eqref{eq:timereversal1} and \eqref{eq:timereversal2}.
Taking expectations over $(T,A,o)$, we deduce that
\begin{align*}
\E\left[ \deg(\rho)|X^{-1}(\rho)| \sum_{y\in X(A)} F(G,X(A),\rho,y)\right]&
\\&\hspace{-3.7cm}=\E\left[\sum_{y\in V(G)} \deg(\rho) \sum_{v\in A}|X^{-1}(\rho)||X^{-1}(y)|^{-1}   F(G,X(A),\rho,y) \mathbbm{1}(X(v)=y)\right]
\\&\hspace{-3.7cm}= \E_T\left[ \sum_{v\in A}f(T,A,o,v)\right]= \E_T\left[ \sum_{v\in A}f(T,A,v,o)\right],
\end{align*}
where the first and second equalities are by definition and the third is by the mass-transport principle for $(T,A,o)$. Applying \eqref{eq:pushforward} we deduce that
\begin{multline*}
\E\left[  \deg(\rho)|X^{-1}(\rho)| \sum_{y\in X(A)} F(G,X(A),\rho,y)\right] \\
= \E_T\left[  \sum_{v\in A} \E_G \left[\sum_{y \in V(G)}  \deg(\rho) \bE_{o,\rho}^{T,G} \left[ |X^{-1}(\rho)|^{-1}|X^{-1}(y)|^{-1}F(G,X(a),y,\rho) \mathbbm{1}(X(v)=y) \right]\right]\right]\\
=
\E\left[  \deg(\rho)|X^{-1}(\rho)| \sum_{y\in X(A)} F(G,X(A),y,\rho)\right].
\end{multline*}
The claim follows since the measurable function $F:\cG_{\bullet\bullet}^\diamond\to[0,\infty]$ was arbitrary.
\end{proof}

Note that the weight that arises when pulling back is identically equal to $1$ when $G$ is a deterministic transitive graph.
Moreover, pushing forward $A=V(T)$, it follows that if $\E[\deg(\rho)\bigl(\#X^{-1}(\rho)\bigr)^{-1}]<\infty$ then $(G,X(V(T)),\rho)$ is locally quasi-unimodular with weight
\[
W\bigl(G,X(V(T)),\rho\bigr) = \myfrac[0.375em]{ \E\left[\deg_G(\rho)\bigl(\#X^{-1}(\rho)\bigr)^{-1} \mid \bigl(G,X(V(T)),\rho\bigr)\right]}{\E\left[\deg_G(\rho)(\#X^{-1}(\rho))^{-1}\right]}.
\]
This is very closely related to \cref{prop:unimodular_trace}.
Pulling this set back along a second tree-indexed walk, we therefore deduce the following immediate corollary.


\begin{corollary}
\label{cor:unimodularintersection}
Let $G$ be a connected, locally finite, unimodular transitive graph and let $\rho$ be a vertex of $G$. For each $i \in \{1,2\}$, let $(T_i,o_i)$ be a unimodular random rooted tree and let $X_i$ be a $T_i$-indexed random walk on $G$ with  $X_i(o_i)=\rho$, where we take the random variables $((T_1,o_1),X_1)$ and $((T_2,o_2),X_2)$ to be independent. Let $I=X_1^{-1}(X_2(V(T_2))) \subseteq V(T_1)$.
 If $X_2$ is almost surely transient, then the random triple
$(T_1,I,o_1)$ is locally quasi-unimodular with weight
\[
W(T_1,I,o_1) = \myfrac[0.4em]{\E\left[ \bigl(\#X_2^{-1}(\rho)\bigr)^{-1} \mid  \bigl(T_1,I,o_1\bigr)\right]}{\E\left[ \bigl(\#X_2^{-1}(\rho)\bigr)^{-1}\right]}.
\]
\end{corollary}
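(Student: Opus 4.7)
My plan is to deduce the corollary by composing the two halves of \cref{prop:unimodularpush}, linked by a change of measure. Throughout, the integrability hypotheses are automatic: $\deg_G(\rho)$ is a positive constant, and transience of $X_2$ ensures $1 \le \#X_2^{-1}(\rho) < \infty$ almost surely, so $(\#X_2^{-1}(\rho))^{-1}$ is bounded by $1$ and strictly positive a.s.

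The first step is a push-forward. I would apply item 2 of \cref{prop:unimodularpush} to $(G,\rho)$ and the independent locally unimodular triple $(T_2,V(T_2),o_2)$, whose local unimodularity is immediate from the unimodularity of $(T_2,o_2)$ since $V(T_2)$ is a deterministic function of $T_2$. Because $\deg_G(\rho)$ is constant, it cancels from numerator and denominator of the resulting weight, giving that $(G,X_2(V(T_2)),\rho)$ is locally quasi-unimodular with weight
\[ W_2 \;:=\; \frac{\E[(\#X_2^{-1}(\rho))^{-1} \mid (G,X_2(V(T_2)),\rho)]}{\E[(\#X_2^{-1}(\rho))^{-1}]}. \]

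The second step is a change of measure followed by a pull-back. I would pass to the probability measure $\tilde\P$ defined by $d\tilde\P/d\P = W_2$. Because $W_2$ depends only on $(T_2,X_2,o_2)$ and the pair $(T_1,X_1,o_1)$ is $\P$-independent of $(T_2,X_2,o_2)$, this reweighting preserves both the marginal law of $(T_1,X_1,o_1)$ and its independence from the $X_2$-side. Under $\tilde\P$ the triple $(G,X_2(V(T_2)),\rho)$ is locally unimodular, so item 1 of \cref{prop:unimodularpush} applies to pull it back along the $T_1$-indexed walk $X_1$. As observed immediately after the proof of \cref{prop:unimodularpush}, the pull-back weight is identically one whenever $G$ is a deterministic transitive graph, and we conclude that $(T_1,I,o_1)$ is locally unimodular under $\tilde\P$.

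The final step transfers this back to $\P$. Local unimodularity of $(T_1,I,o_1)$ under $\tilde\P$ is equivalent to local quasi-unimodularity under $\P$ with weight $\E[W_2 \mid (T_1,I,o_1)]$, so it remains to identify this with $W(T_1,I,o_1)$ as stated. Setting $\cH := \sigma(T_1,X_1,o_1,X_2(V(T_2)))$, we have $\sigma(T_1,I,o_1) \subseteq \cH$, and the $\P$-independence of $(T_1,X_1,o_1)$ and $(T_2,X_2,o_2)$ yields $\E[(\#X_2^{-1}(\rho))^{-1} \mid \cH] = \E[(\#X_2^{-1}(\rho))^{-1} \mid X_2(V(T_2))]$. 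Two applications of the tower property then give $\E[W_2 \mid (T_1,I,o_1)] = W(T_1,I,o_1)$. This conditional-expectation identification is the only mildly subtle step; everything else is a direct invocation of \cref{prop:unimodularpush}.
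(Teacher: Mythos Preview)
Your approach---push forward $V(T_2)$ via item~2 of \cref{prop:unimodularpush}, then pull back along $X_1$ via item~1---is exactly what the paper does; in fact the paper gives no details beyond the sentence ``Pulling this set back along a second tree-indexed walk, we therefore deduce the following immediate corollary.'' One minor remark on the final weight identification: it is slightly cleaner to bias directly by $W_2' := (\#X_2^{-1}(\rho))^{-1}/\E[(\#X_2^{-1}(\rho))^{-1}]$ rather than by $W_2 = \E[W_2' \mid (G,X_2(V(T_2)),\rho)]$, since then $\E[W_2' \mid (T_1,I,o_1)] = W(T_1,I,o_1)$ follows from a single tower step and you avoid the (true, but not purely a tower-property statement) identification of conditioning on the set $X_2(V(T_2))$ with conditioning on its $\mathrm{Stab}(\rho)$-orbit, i.e.\ on the isomorphism class $(G,X_2(V(T_2)),\rho)$.
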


Note that this corollary has a straightforward extension to the case that $(G,\rho)$ is a unimodular random rooted graph or network. (Indeed, one can even consider the case that $G$ carries two different network structures, one for each walk, in a jointly unimodular fashion.)

\subsection{Ends in locally unimodular random trees via the Magic Lemma}

Recall that an infinite graph $G$ is said to be \textbf{$k$-ended} (or that $G$ \textbf{has $k$ ends}) if deleting a finite set of vertices from $G$ results in a maximum of $k$ infinite connected components. 
It is a well-known fact that a Benjamini-Schramm limit of finite trees (i.e., a distributional limit of finite trees each rooted at a uniform random vertex) is either finite or has at most two ends.
%
%
There are several ways to prove this (see e.g.\ \cite[Theorem 13]{CurienNotes}), and several far-reaching generalizations of this fact can be found in \cite{BeSc,AL07,unimodular2}. 

 Our next result shows that this fact also has a \emph{local} version, from which we will deduce \cref{thm:ends,thm:nonintersection} in the next subsection.
Given a graph $G$ and an infinite set of vertices $A$ in $G$, we say that $A$ is $k$-ended if deleting a finite set of vertices from $G$ results in a maximum of $k$ connected components that have infinite intersection with $A$. (In particular, if $T$ is a tree, then an infinite set of vertices $A$ in $T$ is $k$-ended if and only if it accumulates to exactly $k$ ends of $T$.) 

\begin{theorem}
\label{thm:magic}
Let $((T_n,A_n,o_n))_{n\geq 1}$ be a sequence of locally unimodular random rooted trees converging in distribution\footnote{This means that the law $\mu_n$ of $(T_n,A_n,o_n)$ converges to the law $\mu$ of $(T,A,o)$ in the weak topology on the space of probability measures on $\cG_\bullet^\diamond$ associated to the local topology on $\cG_\bullet^\diamond$. } to some random variable $(T,A,o)$ as $n\to \infty$. 
If $A_n$ is finite almost surely for every $n \geq 1$, then $A$ is either finite, one-ended, or two-ended almost surely.
\end{theorem}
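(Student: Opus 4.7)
The plan is to combine a deterministic Magic-Lemma-style counting bound for finite trees with the local unimodular mass-transport principle, pass to the weak limit via Portmanteau, and then finish by contracting $A$ into a unimodular random rooted tree on which a standard rigidity argument applies. The first step is the following bound: for any locally finite tree $T'$, any finite $A' \subseteq V(T')$, and any $m \geq 1$, if $V_1, \ldots, V_k$ enumerate the vertices $v$ such that $T' \setminus v$ has at least three components each containing $\geq m$ elements of $A'$, then $(k+2)m \leq |A'|$. To prove this I consider the Steiner tree $S$ in $T'$ spanning $\{V_1, \ldots, V_k\}$; since the number of components of $T' \setminus V_i$ containing any other $V_j$ is at most $\deg_S(V_i)$ and $\sum_i \deg_S(V_i) \leq 2(k-1)$, the number of \emph{private} mass-$\geq m$ components of $V_i$ (those containing none of the other $V_j$), summed over $i$, is at least $k+2$. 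A tree-has-no-cycle argument shows that private components across different $V_i$ are pairwise disjoint, which gives the desired bound.

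Combining this with the local unimodular mass-transport principle applied to the test function $F(T,A,u,v) = \mathbbm{1}[v \text{ is an } m\text{-trifurcation of } A]/|A|$ gives $\P(o_n \text{ is an } m\text{-trifurcation of } A_n) \leq 1/m$ uniformly in $n$. The event $E_m = \{T \setminus o \text{ has at least three components each containing } \geq m \text{ elements of } A\}$ can be written as a countable increasing union of local (hence clopen) events in $\cG_\bullet^\diamond$, so it is open; applying the Portmanteau theorem and then continuity from above as $m \to \infty$ yields $\P(T \setminus o \text{ has at least three components, each containing infinitely many elements of } A) = 0$.

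I would then introduce the contracted tree $H^* = H^*(T,A)$ with vertex set $A$, in which $u, v \in A$ are adjacent if and only if the geodesic from $u$ to $v$ in $T$ contains no other element of $A$. Applying the local unimodular mass-transport principle for $(T, A, o)$ to test functions of the form $F(T, A, u, v) = F'(H^*(T,A), u, v)$ shows immediately that $(H^*, o)$ is a unimodular random rooted tree. Since a vertex $v \in A$ is a strong trifurcation of $H^*$ (i.e., $H^* \setminus v$ has at least three infinite components) if and only if $T \setminus v$ has at least three components each containing infinitely many elements of $A$, the previous step yields $\P(o \text{ is a strong trifurcation of } H^*) = 0$.

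To finish, I invoke the following mass-transport observation: for any unimodular random rooted graph $(G, o)$ and any $G$-measurable rule $S = S(G) \subseteq V(G)$, the mass-transport principle applied to $F(G, u, v) = \mathbbm{1}[v \in S,\, d(u,v) \leq r]$ yields $\E|S \cap B_r(o)| = \E[|B_r(o)| \mathbbm{1}(o \in S)]$, so if $\P(o \in S) = 0$ then $S \cap B_r(o) = \emptyset$ almost surely for every $r$, and hence $S = \emptyset$ almost surely. Applied to $S$ equal to the set of strong trifurcations of $H^*$, this shows that $H^*$ almost surely has no strong trifurcation, which is equivalent to $H^*$ having at most two ends; since the ends of $H^*$ correspond bijectively to the ends to which $A$ accumulates in $T$, the set $A$ is almost surely finite, one-ended, or two-ended. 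The main obstacle is the careful bookkeeping in the Steiner-tree counting step of the first paragraph; everything else is a routine combination of the mass-transport principle, Portmanteau, and continuity from above.
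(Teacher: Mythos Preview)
Your Steiner-tree counting argument and the first three paragraphs are correct and give a clean variant of the paper's Magic Lemma in the case $r=1$: you obtain $\P(o \text{ is an } m\text{-trifurcation of } A_n) \leq 1/m$, pass to the limit by Portmanteau, and conclude that $T\setminus o$ almost surely does not have three components each meeting $A$ infinitely often.

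The gap is in the last two paragraphs. Your contracted object $H^*$ is \emph{not} a tree in general. If $w\in V(T)\setminus A$ and the geodesics from $w$ to the nearest $A$-points in three different components of $T\setminus w$ are otherwise disjoint from $A$, then those three $A$-points are pairwise adjacent in $H^*$ and form a triangle. (Concretely: three rays emanating from a vertex $w\notin A$, with $A$ consisting of all other vertices, gives $H^*$ equal to three rays glued along a triangle.) For graphs that are not trees, the implication ``no strong trifurcation vertex $\Rightarrow$ at most two ends'' is false --- the triangle-with-three-rays graph has three ends but no vertex whose removal leaves three infinite components. So your mass-transport step, which only rules out trifurcations located at vertices of $A$, does not exclude trifurcation points of $A$ lying in $V(T)\setminus A$, and the deduction that $A$ has at most two ends does not follow.

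The paper's proof avoids exactly this difficulty by working with a two-parameter notion: a vertex $u$ is $(k,r)$-branching if for \emph{every} pair $v,w$ at distance $r$ from $u$ one has $|A|-|A_{u,v}\cup A_{u,w}|\ge k$. The Magic Lemma is proved for all $r$ (first for $r=1$ by an antisymmetric-flow argument, then for general $r$ by slicing the tree into residue classes mod $r$), yielding $\P(o\text{ is }(k,r)\text{-branching})\le 2r/k$ after mass-transport and Portmanteau. The point is that any trifurcation vertex for $A$ within distance $r$ of $o$ --- whether or not it lies in $A$ --- forces $o$ to be $(k,r)$-branching for every $k$; letting $k\to\infty$ for each fixed $r$, and then letting $r\to\infty$, rules out trifurcation points anywhere in $T$. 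Your $m$-trifurcation bound is essentially the $r=1$ case of this, and it is precisely the extra radius parameter that closes the gap you are trying to bridge with $H^*$.
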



We will deduce \cref{thm:magic} as a corollary of \cref{thm:magicmagic}, below. This theorem is a version of the \emph{Magic Lemma} of Benjamini and Schramm  \cite[Lemma 2.3]{BeSc}, see also \cite[Section 5.2]{AsafBook}. Indeed, while the usual statement of the Magic Lemma concerns sets of points in $\R^d$, its proof is powered by a more fundamental fact about trees,  which is implicit in the original proof (see in particular \cite[Claim 5.5]{AsafBook}) and is essentially equivalent to \cref{thm:magicmagic}. We include a full proof for clarity, and since the statement we give is slightly different.
We remark that the Magic Lemma has found diverse applications to several different problems in probability \cite{BeSc,GN12,GiRo13,1804.10191}, and useful generalizations of the Magic Lemma to doubling metric spaces \cite{MR3266996} and to Gromov hyperbolic spaces \cite{1804.10191} have also been found.

Let $T$ be a locally finite tree and let $A$ be a finite set of vertices of $T$. 
For each pair of distinct vertices $u,v$ in $T$, let $A_{u,v}$ be the set of vertices $a\in A \setminus \{v\}$ such that the unique simple path from $u$ to $a$ in $T$ passes through $v$. 
We say that a vertex $u$ of $T$ is \textbf{$(k,r)$-branching} for $A$ if $|A|-|A_{u,v} \cup A_{u,w}| \geq k$ for every pair of vertices $v,w$ with distance exactly $r$ from $u$.

\begin{thm}[Magic lemma for trees]
\label{thm:magicmagic}
Let $T$ be a locally finite tree and let $A$ be a finite set of vertices of $T$. Then for each $k,r \geq 1$, there are at most $r(2|A|-k)/k$ vertices of $T$ that are $(k,r)$-branching for $A$.
\end{thm}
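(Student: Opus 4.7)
The plan is to prove this by a charging argument based on a canonical choice of ``heaviest directions'' at each branching vertex. For each $(k,r)$-branching vertex $u \in B$, canonically select a pair $v_1(u), v_2(u)$ of distinct vertices at distance exactly $r$ from $u$ maximizing $|A_{u, v_1(u)}| + |A_{u, v_2(u)}|$, breaking ties via a fixed well-ordering of $V(T)$. The $(k,r)$-branching hypothesis then gives $|W(u)| \ge k$, where $W(u) := A \setminus (A_{u, v_1(u)} \cup A_{u, v_2(u)})$. Exchanging the order of summation yields
\[
k|B| \le \sum_{u \in B} |W(u)| = \sum_{a \in A} |F_a|, \qquad F_a := \{u \in B : a \in W(u)\}.
\]

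The crux is a \emph{fiber bound} on $|F_a|$ for each $a \in A$. The condition $a \in W(u)$ forces the geodesic $[u,a]$ in $T$ to avoid both $v_1(u)$ and $v_2(u)$: either $d(u,a) < r$ (so the geodesic never reaches the distance-$r$ sphere around $u$) or $d(u,a) \ge r$ with the $r$-step vertex on $[u,a]$ from $u$ lying in some direction other than $v_1(u), v_2(u)$. The key structural claim to prove is that the maximality of $v_1(u), v_2(u)$ forces distinct $u, u' \in F_a$ to lie along a common geodesic segment with controlled midpoint structure determined by $a$; more precisely, I aim to show $|F_a| \le 2r$. Combining the fiber bound with the double counting display, together with a boundary-saving refinement that exploits the canonical ordering of heaviest directions at the extremes of each fiber (essentially observing that the ``top'' direction at a branching vertex is already ``paid for'' by the canonical choice and reduces the effective charge), sharpens $k|B| \le 2r|A|$ to $k|B| \le 2r|A| - rk$, yielding $|B| \le r(2|A| - k)/k$.

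The main obstacle will be proving the fiber bound with the correct constant. The two regimes $d(u,a) < r$ and $d(u,a) \ge r$ must be merged: a vertex $u$ close to $a$ automatically has $a \in W(u)$, yet the $(k,r)$-branching hypothesis at $u$ is a real restriction since it forces $A$ to spread across at least three directions at scale $r$ (as $\sum_{v: d(u,v) = r} |A_{u,v}|$ exceeds the top two values by at least $k$ minus the number of $A$-elements within distance $r$ of $u$). The tree geometry is essential: walking from $u$ in the heaviest direction leads to a vertex whose own heaviest direction must point further away (by maximality), producing a consistent ``orientation'' that aligns the fiber $F_a$ along a single axis emanating from $a$. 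Executing this alignment precisely, and then chasing constants to save the final $-rk$ in the numerator via boundary effects at the endpoints of each fiber-axis, is the subtle combinatorial heart of the argument, and parallels the original Benjamini--Schramm Magic Lemma proof.
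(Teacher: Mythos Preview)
Your proposal has a genuine gap: the fiber bound $|F_a|\le 2r$ is false, already for $r=1$. Here is a concrete counterexample with $k=1$. Build a tree with a vertex $c$ having four neighbours $n_1,n_2,n_3,n_4$. Hang $100$ leaves of $A$ under each of $n_1,n_2,n_4$. Give $n_3$ two children $p_1,p_2$; hang $6$ leaves of $A$ under $p_1$; give $p_2$ two children $q_1,q_2$; hang $3$ leaves of $A$ under $q_1$ and a single leaf $a\in A$ under $q_2$. Then $|A|=310$, and one checks directly that $c$, $n_3$, and $p_2$ are all $(1,1)$-branching (the minimum of $|A|-|A_{u,v}\cup A_{u,w}|$ over neighbour pairs equals $110$, $4$, and $1$ respectively). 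At $c$ the two heaviest directions are among $n_1,n_2,n_4$ (weight $100$ each), so the $n_3$-direction (weight $10$) is excluded and $a\in W(c)$. At $n_3$ the two heaviest directions are $c$ (weight $300$) and $p_1$ (weight $6$), so the $p_2$-direction is excluded and $a\in W(n_3)$. At $p_2$ the two heaviest are $n_3$ (weight $306$) and $q_1$ (weight $3$), so $a\in W(p_2)$. Hence $|F_a|\ge 3>2=2r$. Iterating the construction (repeatedly bifurcating the lightest branch) makes $|F_a|$ as large as one likes, so no uniform bound of the form $|F_a|\le Cr$ exists.

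The heuristic you describe --- that the heaviest direction at $u$ ``points away'' and yields a consistent orientation along which the fiber aligns --- is valid for the \emph{single} heaviest direction but fails for the second-heaviest. Once the dominant direction points back toward the bulk of $A$, the remaining directions can be nearly balanced, and whichever one is not selected dumps its entire $A$-content into $W(u)$. This cascades down the tree, which is exactly what the example exploits. The double-counting identity $k|B|\le\sum_a|F_a|$ is of course correct, but without the fiber bound it does not by itself yield the theorem, and the ``boundary-saving refinement'' cannot rescue a bound that fails by an unbounded factor.

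The paper's proof avoids per-element fiber bounds entirely. It orients $T$ towards a single end, replaces $(k,r)$-branching by the weaker orientation-dependent notion of being $(k,r)$-\emph{supported}, and for $r=1$ runs a flow argument: the antisymmetric edge function $f(u,\sigma(u))=|A_u|\wedge\tfrac{k}{2}$ is summed over the convex hull of $A$, the $(k,1)$-supported condition forces each supported vertex to absorb at least $k/2$ units of net inflow, and the total net flow telescopes to $k/2$. This gives $\tfrac{k}{2}|B|\le|A|-\tfrac{k}{2}$ directly. The case $r\ge 2$ is then reduced to $r=1$ by partitioning the horocycle levels of $T$ into $r$ residue classes and building an auxiliary tree for each class.
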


\begin{proof}
By attaching an infinite path to $T$ if necessary, we may assume without loss of generality that $T$ is infinite. 
We may then pick an orientation of $T$ so that every vertex $v$ of $T$ has exactly one distinguished neighbour, which we call the parent of $v$ and denote by $\sigma(v)$. This leads to a decomposition $(L_n)_{n \in \Z}$ of $T$ into layers, unique up to a shift of index, such that the parent of every vertex in $L_n$ lies in $L_{n-1}$ for every $n \in \Z$. These levels are sometimes known as \emph{horocycles}, see e.g. \cite[Section II.12.C]{Woess}. (It may be that $L_n =\emptyset$ for every $n$ larger than some $n_0$, but this possibility will not cause us any problems.)
We denote by $\sigma^r$ the $r$-fold iteration of $\sigma$, so that if $v\in L_{n}$ then $\sigma^r(v) \in L_{n-r}$.
We call $u$ a \textbf{descendant} of $v$, and call $v$ an \textbf{ancestor} of $u$, if $v=\sigma^r(u)$ for some $r\geq 0$. For each vertex $v$ of $T$, we let $A_v$ be the set of vertices in $A \setminus \{v\}$  that are descendants of $v$.


We say that a vertex $v$ is $(k,r)$\textbf{-supported} if $|A_{u}|-|A_{w}| \geq k$ for every $w$ with $\sigma^r(w)=v$. Observe that 
for every vertex $u$ and every $w$ with $\sigma^r(w)=u$, we have that $A_w = A_{u,w} \subseteq A_u$ and that $A_u \subseteq A \setminus A_{u,\sigma^r(u)}$, so that 
$|A_u|-|A_w| \geq |A| - |A_{u,\sigma^r(u)} \cup A_{u,w}|$. Thus, every $(k,r)$-branching vertex is $(k,r)$-supported, and it suffices to prove that there exist at most $r(2|A|-k)/k$ vertices that are $(k,r)$-supported. We may assume that $|A|\geq k$, since otherwise there cannot be any $(k,r)$-supported vertices and the claim holds vacuously. 

We begin with the case $r=1$. We follow closely the proof of \cite[Claim 5.5]{AsafBook}. Let $V$ be the vertex set of $T$, and  let $B$ be the set of $(k,1)$-supported points.
Define a function $f: V^2 \to \R$ by
\[
f(u,v)  = 
\begin{cases}
\phantom{-}|A_u| \wedge \frac{k}{2} & v = \sigma(u)\\
-|A_v|\wedge \frac{k}{2} & u = \sigma(v)\\
\phantom{-}0 & \text{otherwise}.
\end{cases}
\]
This function is antisymmetric in the sense that $f(u,v)=-f(v,u)$  for every $u,v \in V$. We observe that
\[
0 \leq f(u,\sigma(u)) = \frac{k}{2}\wedge \sum_{v: \sigma(v)=u}\left[\mathbbm{1}(v \in A)+f(v,u)\right]  \leq \sum_{v: \sigma(v)=u}\left[\mathbbm{1}(v \in A)+f(v,u)\right] 
\]
for every $u \in V$, as can be verified by splitting into two cases according to whether $u$ has a child $v$ with $|A_v|\geq k/2$ or not. Moreover, if $u$ is $(k,1)$-supported then
\[
 f(u,\sigma(u))   \leq \sum_{v: \sigma(v)=u}\left[\mathbbm{1}(v \in A)+f(v,u)\right]  - \frac{k}{2},
\]
where the inequality may be verified by splitting into three cases according to whether $u$ has zero, one, or more than one child $v$ with $|A_v| \geq k/2$.

Let $S$ be the finite set spanned by the union of the geodesics between pairs of points in $A$. Observe that $B \cup A \subseteq S$ and that if $v\notin S$ then $A_v \in \{A,\emptyset\}$. Note also that there is a unique vertex $\rho \in S$ such that every vertex of $S$ is descended from $\rho$, and this vertex $\rho$ satisfies $A = A_{\sigma(\rho)}$. Let $S' = S \cup \{\sigma(\rho)\}$.
We may sum the above estimates to obtain that
%
%
%
\begin{multline*}
|A|-\frac{k}{2}|B| + \sum_{u \in S'} \left[\sum_{v: \sigma(v)=u}f(v,u) - f(u,\sigma(u))\right]
\\=
 \sum_{u \in S'} \left[\sum_{v: \sigma(v)=u}\left[\mathbbm{1}(v \in A)+f(v,u)\right] - f(u,\sigma(u)) - \frac{k}{2}\mathbbm{1}\bigl(u\in B)\right] \geq 0.
\end{multline*}
On the other hand, using the antisymmetry property of $f$ and rearranging we obtain that
\begin{align*}
\sum_{u \in S'} \left[\sum_{v: \sigma(v)=u}f(v,u) - f(u,\sigma(u))\right] &= \sum_{v \notin S', \sigma(v) \in S'} f(v,\sigma(v)) + \sum_{u,v\in S'} f(u,v) - f(\sigma(\rho),\sigma^2(\rho))\\
&=-f(\sigma(\rho),\sigma^2(\rho)) = - \frac{k}{2}, 
\end{align*}
so that
$\frac{k}{2} |B| \leq |A|-\frac{k}{2}$
as claimed.

Now let $r\geq 2$. We will deduce the bound in this case from the $r=1$ bound by constructing an auxiliary tree corresponding to each residue class mod $r$. For each $1 \leq m \leq r$, let 
$R_m = \bigcup_{n\in \Z} L_{nr+m}$ and let
$T_m$ be the tree constructed from $T$ by connecting each vertex in a level of the form $L_{nr + m}$ to all of its descendants in $\bigcup_{\ell=1}^r L_{nr+m+\ell}$. Thus, $T_m$ has the same vertex set as $T$, and every vertex not in $R_m$ is a leaf in $T_m$. Observe that if a vertex $v\in R_m$ is $(k,r)$-supported in $T$ then it is $(k,1)$-supported in $T_m$. For each $1\leq m \leq r$ we know that there are at most $(2|A|-k)/k$ such vertices, and the claim follows by summing over $m$.
\end{proof}

\begin{proof}[Proof of \cref{thm:magic}]
Let $(T,A,o)$ be locally unimodular and suppose that $A$ is almost surely finite. Let $k,r \geq 1$ and let $B_{k,r}$ be the set of vertices of $T$ that are $(k,r)$-branching for $A$. Considering the function $F:\cT_{\bullet\bullet}^\diamond\to [0,\infty]$ defined by $F(g,a,u,v) = \mathbbm{1}(v$ is $(k,r)$-branching for $a)/|a|$, and applying the mass-transport principle, we obtain that
\[
\E\left[|B_{k,r} \cap A|/|A|\right] = 
\E\left[ \sum_{v\in A}F(G,A,\rho,v)\right] = \E\left[ \sum_{v\in A}F(G,A,v,\rho)\right] = \P(o \in B_{k,r}).
\]
Applying \cref{thm:magicmagic} to bound the left hand side, we obtain that
\begin{equation}
\label{eq:magic_unimod_1}
\P(o \in B_{k,r}) \leq \frac{2r}{k}
\end{equation}
for every $k,r\geq 1$ and every locally unimodular triple $(T,A,o)$ such that $A$ is almost surely finite.

Now observe that for each $k,r\geq 1$, the set of $(t,a,u) \in \cT_\bullet^\diamond$ such that $u$ is $(k,r)$-branching for $a$ is open with respect to the local topology on $\cT_\bullet^\diamond$. It follows by the portmanteau theorem that the map $\mu \mapsto \mu(\{(t,a,u): u \text{ is $(k,r)$-branching for $a$}\})$ is weakly lower semi-continuous on the space of probability measures on $\cT_\bullet^\diamond$. We deduce that if $((T_n,A_n,o_n))_{n\geq 1}$ and $(T,A,o)$ are as in the statement of the theorem then
\begin{equation}
\label{eq:magic_unimod_2}
\P\Bigl(\text{$o$ is $(k,r)$-branching for $A$}\Bigr) \leq \lim_{n\to\infty} \P\Bigl(\text{$o_n$ is $(k,r)$-branching for $A_n$}\Bigr) \leq \frac{2r}{k}
\end{equation}
for every $r,k \geq 1$. This is a quantitative refinement of the statement of the theorem: If $A$ is infinite with  more than two ends then there exists a vertex $v$ of $T$ whose removal disconnects $T$ into at least three connected components that have infinite intersection with $A$. If there is such a vertex within distance $r$ of $o$, then $o$ is $(k,r)$-branching for every $k\geq 1$. The estimate \eqref{eq:magic_unimod_2} implies that this event has probability zero for every $r \geq 1$, and the claim follows.
\end{proof}

\subsection{Completing the proof}

We now deduce \cref{thm:ends,thm:nonintersection} from \cref{thm:magic}. We begin with the following simple lemma.

\begin{lemma}
\label{lem:subcritical}
Let $G$ be a transitive nonamenable graph with spectral radius $\|P\|<1$, and let $\mu_1,\mu_2$ be offspring distributions with $\overline{\mu_1},\overline{\mu_2} \leq \|P\|^{-1}$, and suppose that this inequality is strict for at least one of $i=1,2$. Let $x,y$ be vertices of $G$. Then an independent $\mu_1$-BRW started at $x$ and $\mu_2$-BRW  started at $y$ intersect at most finitely often almost surely.
\end{lemma}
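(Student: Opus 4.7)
The plan is to bound the expected cardinality of $\operatorname{Tr}(Y_1)\cap\operatorname{Tr}(Y_2)$ and conclude almost-sure finiteness, where $Y_1,Y_2$ are the two independent BRWs. Let $N_i(v)$ denote the total number of times $Y_i$ visits vertex $v$. The standard first-moment computation for tree-indexed walks gives $\E N_i(v)=\sum_{n\geq 0}\overline{\mu_i}^{\,n} p_n(x_i,v)$, where $x_1=x$ and $x_2=y$. By independence of $Y_1$ and $Y_2$ and Markov's inequality,
\[
\E\,|\operatorname{Tr}(Y_1)\cap\operatorname{Tr}(Y_2)| \;=\; \sum_v \P\bigl(v\in\operatorname{Tr}(Y_1)\bigr)\,\P\bigl(v\in\operatorname{Tr}(Y_2)\bigr) \;\leq\; \sum_v \E N_1(v)\cdot\E N_2(v).
\]

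Next, I would use reversibility of simple random walk on the transitive graph $G$ (all degrees are equal, so $p_m(y,v)=p_m(v,y)$) together with Chapman--Kolmogorov to collapse the sum over $v$:
\[
\sum_v \E N_1(v)\,\E N_2(v) \;=\; \sum_{n,m\geq 0}\overline{\mu_1}^{\,n}\,\overline{\mu_2}^{\,m}\,p_{n+m}(x,y).
\]
Relabel so that $\overline{\mu_1}\leq \overline{\mu_2}$; the strictness hypothesis then forces $\overline{\mu_1}<\|P\|^{-1}$. If $\overline{\mu_2}<\|P\|^{-1}$ as well, then the spectral-radius bound $p_k(x,y)\leq \|P\|^k$ (which holds on transitive graphs by Cauchy--Schwarz in $\ell^2(\deg)$) reduces the double sum to a convergent product of geometric series. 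If instead $\overline{\mu_2}=\|P\|^{-1}$, set $k=n+m$ and sum the inner geometric series in $n$:
\[
\sum_{n,m}\overline{\mu_1}^{\,n}\,\overline{\mu_2}^{\,m}\,p_{n+m}(x,y) \;\leq\; \frac{\overline{\mu_2}}{\overline{\mu_2}-\overline{\mu_1}} \sum_{k\geq 0}\overline{\mu_2}^{\,k}\,p_k(x,y) \;=\; \frac{\overline{\mu_2}}{\overline{\mu_2}-\overline{\mu_1}}\,G\bigl(x,y\,|\,\|P\|^{-1}\bigr),
\]
where $G(\cdot,\cdot|z)$ denotes the SRW Green function.

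The only point requiring care is the finiteness of $G(x,y\,|\,\|P\|^{-1})$; the diagonal case is precisely Theorem~7.8 of Woess, already cited in the introduction, and a short comparison (bounding an odd-time $p_n(x,y)$ by $p_{n-1}(y,y)$ via one random-walk step, then using transitivity to write $p_{n-1}(y,y)=p_{n-1}(x,x)$) transfers finiteness to arbitrary endpoints. Thus the main (mild) obstacle is the boundary case $\overline{\mu_2}=\|P\|^{-1}$, where the crude spectral bound fails and one must invoke the critical Green function estimate; everything else is routine first-moment manipulation.
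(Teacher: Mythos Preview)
Your proposal is correct and follows essentially the same route as the paper: compute the first moment, use reversibility and Chapman--Kolmogorov to collapse to $\sum_{n,m}\overline{\mu_1}^{\,n}\overline{\mu_2}^{\,m}p_{n+m}(x,y)$, and invoke \cite[Theorem~7.8]{Woess} for finiteness in the boundary case. The only cosmetic differences are that the paper counts colliding pairs of tree vertices rather than the trace intersection (your Markov step makes these equivalent), and that Woess's theorem already covers off-diagonal Green functions, so your reduction to the diagonal is unnecessary.
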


\begin{proof}[Proof of \cref{lem:subcritical}]
For $i=1,2$, let $T_i$ be a $\mu_i$-Galton-Watson tree with root $o_i$ and let $X_i$ be a random walk on $G$ indexed by $T_i$, started at $x$ when $i=1$ and $y$ when $i=2$, where the pair $(T_1,X_1)$ is independent of $(T_2,X_2)$. Let $V_i$ be the vertex set of $T_i$. The expected number of vertices of $T_i$ with distance exactly $n$ from $o_i$ is $\overline{\mu_i}^n$, and we can compute that
\begin{align*}\E\left[\#\{(u,v) \in V_1 \times V_2 : X_1(u)=X_2(v)\} \right]
&= 
\sum_{z \in V(G)} \sum_{n,m 
\geq 0} \overline{\mu_1}^n p_n(x,z) \overline{\mu_2}^m p_m(y,z)\\
&= \sum_{z \in V(G)} \sum_{n,m 
\geq 0} \overline{\mu_1}^n p_n(x,z) \overline{\mu_2}^m p_m(z,y)\\
&= \sum_{n,m 
\geq 0}  \overline{\mu_1}^n\overline{\mu_2}^m p_{n+m}(x,y).
\end{align*}
Since $\overline{\mu_1},\overline{\mu_2} \leq \|P\|^{-1}$ and  this inequality is strict for at least one of $i=1,2$, it follows by an elementary calculation that there exists a constant $C$ such that
\[
\E\left[\#\{(u,v) \in V_1 \times V_2 : X_1(u)=X_2(v)\} \right]
\leq C \sum_{n \geq 0} \|P\|^{-n}p_n(x,y).
\]
The right-hand side is finite by \cite[Theorem 7.8]{Woess}, concluding the proof.  (Note that we do not need to invoke this theorem if we have both strict inequalities $\overline{\mu_1},\overline{\mu_2} < \|P\|^{-1}$, and in this case the claim holds for any bounded degree nonamenable graph.)
\end{proof}



Given an offspring distribution $\mu$ and $p\in [0,1]$, let $\mu^p$ be the offspring distribution 
defined by
\[
\mu^p(k) = \sum_{n\geq k} \binom{n}{k}p^k(1-p)^{n-k}\mu(k),
\]
so that $\overline{\mu^p}=p \overline{\mu}$ and $\mu^p$ converges weakly to $\mu=\mu^1$ as $p\uparrow 1$.

\begin{proof}[Proof of \cref{thm:nonintersection}]
First, observe that the claim is clearly equivalent to the corresponding claim concerning \emph{unimodular} branching random walks. 
 Moreover, 
it suffices to consider the case that $x=y=\rho$, where $\rho$ is some fixed root vertex of $G$. Indeed, if there exists \emph{some} choice of starting vertices $x$ and $y$ so that the two walks intersect infinitely often with positive probability, then \emph{any} choice of starting vertices must have this property, since there exist times $n$ and $m$ such that with positive probability the first walk has at least one particle at $x$ at time $n$ and the second walk has at least one particle at $y$ at time $m$, and on this event we clearly have a positive conditional probability of having infinitely many intersections.
We may also assume that the offspring distributions 
 $\mu_1,\mu_2$ have $\overline{\mu_1},\overline{\mu_2}= \|P\|^{-1}>1$, since otherwise the claim follows from \cref{lem:subcritical}. 
 In particular, this implies that both $\mu_1$ and $\mu_2$ are non-trivial. 

 For each $i\in\{1,2\}$ let $(T_i,o_i)$ be a unimodular Galton-Watson tree with offspring distribution $\mu_i$,  let $X_i$ be a $T_i$ indexed random walk on $G$ with $X_i(o_i)=\rho$, and let $U_i=(U_i(e))_{e\in E(T_i)}$ be a collection of i.i.d.\ uniform $[0,1]$ random variables indexed by the edge set of $T_i$. We take $X_i$ and $U_i$ to be conditionally independent given $T_i$ for each $i=1,2$, and take the two random variables $((T_1,o_1),X_1,U_1)$ and $((T_2,o_2),X_2,U_2)$ to be independent of each other.
We have by the results of \cite{MR2284404,MR2426846} that $X_1$ and $X_2$ are both transient almost surely. Let $I = X_1^{-1}(X_2(V(T_2)))$. We wish to show that $I$ is finite almost surely.

For each $i\in \{1,2\}$ and $p\in [0,1]$, let $T_i^p$ be the component of $o_i$ in the subgraph of $T_i$ spanned by the edges of $T_i$ with $U_i(e) \leq p$. Let $X_i^p$ be the restriction of $X_i$ to $T_i^p$. Then $(T_i^p,o_i)$ is a unimodular random tree, and $X_i^p$ is distributed as a $T_i^p$-indexed random walk on $G$. Observe that we can alternatively sample a random variable whose law is equivalent (i.e., mutually absolutely continuous) to that of $(T_i^p,o_i)$ 
by taking two independent Galton-Watson trees with law $\mu_i^p$, attaching these trees by a single edge between their roots, and then deciding whether to delete or retain this additional edge with probability $p$, independently of everything else. It follows from this observation together with \cref{lem:subcritical} that the set  
$I^p := (X_1^p)^{-1}(X_2^p(V(T_2^p)))$ is almost surely finite when $p<1$.

By \cref{cor:unimodularintersection}, for each $p\in [0,1]$ the random triple
$(T_1^p,I^p,o_1)$ is locally quasi-unimodular with weight
\[
W_p(T_1^p,I^p,o_1) = \myfrac[0.35em]{\E\left[\left(\#(X_2^p)^{-1}(\rho)\right)^{-1} \mid (T_1^p,I^p,o_1)\right]}{\E\left[\left(\#(X_2^p)^{-1}(\rho)\right)^{-1}\right]}.
\]
For each $p\in [0,1]$ let $W_p'$ be the random variable 
\[
W_p':=\myfrac[0.3em]{\left(\#(X_2^p)^{-1}(\rho)\right)^{-1}}{\E\left[\left(\#(X_2^p)^{-1}(\rho)\right)^{-1}\right]^{-1}},
\]
so that $W_p(T_1^p,I^p,o_1) = \E[W'_p \mid (T_1^p,I^p,o_1)]$. 
Since $X_2=X_2^1$ is transient, the expectation in the denominator is bounded away from $0$. Since we also trivially have that $\left(\#(X_2^p)^{-1}(\rho)\right)^{-1} \leq 1$, it follows that the 
random variables $W'_p$ are all bounded by the finite constant $1/\E\bigl[\left(\#(X_2)^{-1}(\rho)\right)^{-1}\bigr]$. Moreover, we clearly have that $W'_p \to W_1'$ almost surely as $p \uparrow 1$. For each $p\in [0,1]$, let $\nu_p$ be the law of $(T_1^p,I^p,o_1)$ and let $\nu_p'$ be the locally unimodular probability measure given by biasing $\nu_p$ by $W_p$. We clearly have that $\nu_p$ converges weakly to $\nu_1$ as $p\uparrow 1$, and we claim that $\nu_p'$ converges weakly to $\nu_1'$ as $p\uparrow 1$ also. Indeed, if $F:\cG_\bullet^\diamond \to \R$ is a bounded continuous function then we trivially have that $F(T_1^p,I^p,o_1)$ converges almost surely to $F(T_1,I,o_1)$ as $p \uparrow 1$, and it follows by bounded convergence that
\begin{align*}
\lim_{n\to\infty}\E\left[W_p(T_1^p,I^p,o_1)F(T_1^p,I^p,o_1)\right] &= \lim_{n\to\infty}\E\left[W_p'F(T_1^p,I^p,o_1)\right] \\&= \E\left[W_1'F(T_1,I,o_1)\right] = \E\left[W_1(T_1,I,o_1)F(T_1,I,o_1)\right].
\end{align*}
 Since $F$ was arbitrary, this establishes the desired weak convergence.
Since the sets $I^p$ are almost surely finite for every $0\leq p < 1$, it follows from \cref{thm:magic} that $I=I^1$ is either finite, one-ended or two-ended almost surely.

Suppose for contradiction that $I$ is infinite with positive probability. Since $\mu_1$ and $\mu_2$ are non-trivial, there exists $n$ such that, with positive probability, $o_1$ and $o_2$ both have at exactly three descendants belonging to $X^{-1}(\rho)$ in level $n$.
Condition on the $\sigma$-algebra $\cF$ generated by the first $n$ generations of each tree and the restriction of $X$ to these generations, and suppose that this event holds. Denote the three descendants in each tree by $o_{i,1},o_{i,2},o_{i,3}$ (the choice of enumeration is not important), let $T_{i,j}$ be the subtree of $T_i$ spanned by $o_i$ and its descendants, and let $X_{i,j}$ be the restriction of $X_i$. Then $T_{i,j}$ is conditionally distributed as a Galton-Watson tree with offspring distribution $\mu_i$, and $X_{i,j}$ is a $T_{i,j}$-indexed walk on $G$ started with $X_{i,j}(o_{i,j})=\rho$. Moreover, the random variables $((T_{i,j},o_{i,j}),X_{i,j})$ are all conditionally independent of each other given $\cF$, and our assumption implies that $X_{1,j}^{-1}(X_{2,j}(V(T_{2,j})) = \infty$ with positive conditional probability for each $1 \leq j \leq 3$. It follows by independence that $X_{1,j}^{-1}(X_{2,j}(V(T_{2,j})) = \infty$ for every $1 \leq j \leq 3$ with positive probability, and hence that $I$ has at least three ends with positive probability, a contradiction.
\end{proof}



\begin{remark}
The last part of the proof of \cref{thm:nonintersection} can be generalized as follows: Suppose that $G$ is a graph, $\mu$ is a non-trivial offspring distribution, $T$ is a Galton-Watson tree with offspring distribution $\mu$, and $X$ is a $T$-indexed random walk in $G$. Let $A$ be a set of vertices in $G$. Then the event $\{X^{-1}(A)$ is infinite and has finitely many ends$\}$ has probability zero.
\end{remark}

It remains to deduce \cref{thm:ends} from \cref{thm:nonintersection}; this is very straightforward. We also prove the following slight variation on the same result.

\begin{theorem}
\label{thm:endsunimod}
Let $G$ be a unimodular transitive graph.
 Let $\mu$ be an offspring distribution with $1<\overline{\mu} \leq \|P\|^{-1}$. Then the trace of a \emph{unimodular} branching random walk on $G$ with offspring distribution $\mu$ is infinitely ended and has no isolated ends almost surely on the event that it survives forever.
\end{theorem}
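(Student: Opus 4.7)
The plan is to deduce \cref{thm:endsunimod} directly from \cref{thm:nonintersection} together with the quasi-unimodular structure of the trace provided by \cref{prop:unimodular_trace}. There are two claims to verify on survival: that $\operatorname{Tr}(X)$ has infinitely many ends, and that it has no isolated ends.

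For the infinitely-ended part, my plan is to build, almost surely on survival, an infinite chain of vertex-disjoint subtrees of $T$ each supporting a surviving sub-BRW, and then convert these into distinct ends of $\operatorname{Tr}(X)$ via non-intersection. Standard supercritical Galton--Watson theory produces almost surely on survival an infinite sequence of branch vertices $v_1,v_2,\ldots$ in $T$ together with distinct children $w_j,w_j'$ of each $v_j$ whose subtrees both survive, chosen so that $v_{j+1}\in T_{w_j'}$; the subtrees $T_{w_1},T_{w_2},\ldots$ are then pairwise vertex-disjoint. Setting $A_j:=X(V(T_{w_j}))$, each $A_j$ is infinite by transience, and since the restrictions $X|_{T_{w_j}}$ are conditionally independent BRWs given the positions $X(w_j)$, \cref{thm:nonintersection} gives $|A_i\cap A_j|<\infty$ for all $i\ne j$. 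For fixed $N$, I would decompose $X|_{T\setminus\bigcup_{j\le N}T_{w_j}}$ as the finite root-to-$v_N$ path together with the finitely many independent sibling sub-BRWs hanging off the $v_j$'s and their ancestors, plus the tail sub-BRW on $T_{w_N'}$; a further application of \cref{thm:nonintersection} to each such pair shows that this complementary trace meets each $A_j$ ($j\le N$) in only finitely many vertices. Let $F_N$ be the union of all these finitely many finite sets together with $\{X(v_j):j\le N\}$. Since $\{w_j,v_j\}$ is the unique tree-edge joining $T_{w_j}$ to its complement, and its image lies in $F_N$, every edge of $\operatorname{Tr}(X)$ incident to a vertex of $A_j\setminus F_N$ has its other endpoint inside $A_j$. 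Therefore $A_j\setminus F_N$ is a union of components of $\operatorname{Tr}(X)\setminus F_N$, and it contains an infinite component because $A_j$ is infinite, connected, and locally finite. This produces $\ge N$ distinct infinite components of $\operatorname{Tr}(X)\setminus F_N$ for every $N$, whence $\operatorname{Tr}(X)$ is infinitely-ended.

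For the no-isolated-ends part, I would invoke the standard fact that any almost-surely infinitely-ended quasi-unimodular random rooted graph almost surely has no isolated ends. This is obtained via the mass-transport principle: sending unit mass from each vertex of a hypothetical isolated one-ended component to its finite boundary produces a vertex receiving infinite mass in expectation, contradicting unimodularity and local finiteness. Since $(\operatorname{Tr}(X),\rho)$ is quasi-unimodular by \cref{prop:unimodular_trace} (and the explicit weight computation following it for unimodular BRW), this concludes the proof.

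The step I expect to be the main obstacle is making the decomposition in the first paragraph fully rigorous, specifically verifying that $X|_{T\setminus\bigcup_{j\le N}T_{w_j}}$ really does split into the finite path plus finitely many pieces that are each BRWs conditionally independent of the $X|_{T_{w_j}}$'s, so that \cref{thm:nonintersection} can be invoked pairwise and the finitely many resulting finite-intersection contributions can be summed. The offspring moment condition $\overline{\mu}\le\|P\|^{-1}$ is what allows \cref{thm:nonintersection} to be applied to each such pair, and the unique-tree-edge property of $\{w_j,v_j\}$ between $T_{w_j}$ and its complement is what ultimately forces the graph structure of $\operatorname{Tr}(X)\setminus F_N$ to respect the partition into the $A_j$'s.
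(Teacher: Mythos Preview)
Your proposal is correct and follows the same two-step strategy as the paper: use \cref{thm:nonintersection} to show the trace is infinitely-ended on survival, then use quasi-unimodularity of the trace (\cref{prop:unimodular_trace}) together with the standard end-structure result for unimodular graphs to rule out isolated ends. For the second step the paper simply invokes \cite[Proposition~6.10]{AL07} rather than sketching a mass transport; your sketch points to the right fact, though making the choice of ``isolated one-ended component'' and its boundary canonical requires the usual care.

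Where you differ is in the implementation of the first step, and the paper's version dissolves exactly the obstacle you flag. Rather than building a chain of branch points $v_1,v_2,\dots$ and then having to decompose the \emph{infinite} complement $T\setminus\bigcup_{j\le N}T_{w_j}$ into a finite path, finitely many sibling sub-BRWs, and a tail sub-BRW (so that \cref{thm:nonintersection} can be applied a second time to each piece against each $A_j$), the paper simply fixes $n$, lets $A_{n,i}$ be the image of the subtree below the $i$th generation-$n$ vertex, and removes
\[
K_n = W_n \cup \bigcup_{1\le i<j\le N_n}(A_{n,i}\cap A_{n,j}),
\]
where $W_n$ is the image of the first $n$ generations. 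The point is that the complement of the level-$n$ subtrees in $T$ \emph{is} the first $n$ generations, which is finite; hence $W_n$ is automatically finite and no complement decomposition or second round of applications of \cref{thm:nonintersection} is required. Deleting $K_n$ leaves at least $M_n$ infinite components, where $M_n$ is the number of generation-$n$ vertices with infinite progeny, and $M_n\to\infty$ on survival. Your chain construction works but buys nothing extra and costs the additional bookkeeping you identify; the level-set decomposition avoids it entirely.
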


\begin{proof}[Proof of \cref{thm:ends,thm:endsunimod}]
We begin by proving that the trace of a branching random walk is infinitely-ended on the event that it survives forever.
Let $(T,o)$ be a Galton-Watson tree with offspring distribution $\mu$, and let $X$ be a $T$-indexed random walk in $G$ with $T(o)=\rho$. Let $\cF_n$ be the $\sigma$-algebra generated by the first $n$ generations of $T$ and the restriction  of $X$ to these generations. Let the vertices of $T$ in generation $n$ be enumerated $v_{n,1},\ldots,v_{n,N_n}$, and let $M_n$ be the number of vertices in generation $n$ that have infinitely many descendants. Let $W_n$ be the image of the first $n$ generations of $T$ under $X$ and let $A_{n,i}$ be the image under $X$ of the offspring of $v_{n,i}$. \cref{thm:nonintersection} implies that $|A_{n,i} \cap A_{n,j}| < \infty$ for every $1 \leq i < j \leq N_n$. Let $K_n = W_n \cup \bigcup_{1 \leq i < j \leq N_n} A_{n,i} \cap A_{n,j}$. 
Then $K_n$ is finite and deleting $K_n$ from the trace of $X$ results in at least $M_n$ infinite connected components. On the other hand, standard results in the theory of branching processes imply that $M_n \to \infty$ almost surely on the event that $T$ is infinite, concluding the proof. A similar proof establishes that the trace of a \emph{unimodular} branching random walk is infinitely-ended almost surely on the event that it survives forever.

Applying \cref{prop:unimodular_trace} and \cite[Proposition 6.10]{AL07}, we deduce that the trace of a unimodular branching random walk has continuum ends and no isolated end almost surely on the event that it survives forever. The fact that the same claim holds for the usual branching random walk trace follows by a further application of \cref{thm:nonintersection}. This deduction will use the notion of the \emph{space of ends} of a tree as a topological space, see \cite[Section 21]{Woess} for a definition. 
Let $(T_1,o)$ and $(T_2,o')$ be independent Galton-Watson trees with offspring distribution $\mu$, and let $(T,o)$ be the augmented Galton-Watson tree formed by attaching $(T_1,o)$ and $(T_2,o')$ by a single edge connecting $o$ to $o'$.
Let $X$ be a $T$-indexed random walk with $X(o)=\rho$, and let $X_1$ and $X_2$ be the restrictions of $X$ to $T_1$ and $T_2$ respectively, so that $\operatorname{Tr}(X)$ has continuum many ends and no isolated ends almost surely on the event that it is infinite. \cref{thm:nonintersection} is easily seen to imply that the space of ends of $\operatorname{Tr}(X)$ is equal to the disjoint union of the spaces of ends of $\operatorname{Tr}(X_1)$ and $\operatorname{Tr}(X_2)$, and it follows that $\operatorname{Tr}(X_1)$ has continuum many ends and no isolated end almost surely on the event that $T_1$ is infinite, as desired.
\end{proof}


\section{Further results}

\label{sec:further}

We now discuss how several properties of the unimodular random tree $(T,o)$ are inherited by the quasi-unimodular random rooted graph $(\operatorname{Tr}(X),\rho)$. Since the material is tangential to the main topic of the paper, we will be a little brief and refer the reader to \cite{AL07,unimodular2,CurienNotes} for more detailed treatments of the associated definitions.

\medskip
\noindent
\textbf{Hyperfiniteness.} Roughly speaking, a unimodular random rooted graph is said to be \textbf{hyperfinite} if it can be exhausted by finite subgraphs of itself in a jointly unimodular way. Detailed definitions can be found in \cite[Section 8]{AL07} and \cite[Section 3]{unimodular2}. Hyperfiniteness is closely related to amenability. Indeed, a unimodular transitive graph is hyperfinite if and only if it is amenable \cite[Theorems 5.1 and 5.3]{BLPS99}. A notion of amenability for unimodular random rooted graphs (sometimes referred to as \emph{invariant amenability}) was developed in \cite[Section 8]{AL07}, where it was shown to be equivalent to hyperfiniteness under the assumption that $\E[\deg(\rho)]<\infty$. See also \cite[Section 3]{unimodular2}. 
A unimodular random rooted \emph{tree} is hyperfinite if and only if it is either finite or has at most two ends almost surely; see \cite{unimodular2} for many further characterizations. In particular, a unimodular Galton-Watson tree with offspring distribution $\mu$ is hyperfinite if and only if $0 \leq \overline{\mu} \leq 1$.

The following theorem resolves \cite[Conjecture 4.2]{MR2914859}. (Note that a positive solution to that conjecture also follows from \cref{thm:ends}; the proof below is both more direct and more general.) We say that a quasi-unimodular random rooted graph $(G,\rho)$ is hyperfinite if its law is equivalent to that of a hyperfinite unimodular random rooted graph. (It follows from \cite[Theorem 8.5]{AL07} that if two unimodular random rooted graphs have equivalent laws, then one is hyperfinite if and only if the other is; note that the equivalence between the items of that theorem other than item 1 does not require the integrability assumption $\E[\deg(\rho)]<\infty$.)

\begin{thm}
\label{thm:hyperfinite}
Let $(G,\rho)$ be a unimodular random rooted graph, and let $(T,o)$ be an independent unimodular random rooted tree. Let $X$ be a $T$-indexed walk in $G$ with $X(o)=\rho$, and let $\operatorname{Tr}(X)$ be the trace of $X$. Suppose that $X$ is almost surely transient and that the integrability assumption $\E[\deg_G(\rho)]<\infty$ holds. Then $(\operatorname{Tr}(X),\rho)$ is hyperfinite if and only if $(T,o)$ is hyperfinite.
\end{thm}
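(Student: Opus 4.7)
My plan is to transfer hyperfiniteness in both directions by transporting witnessing finite-component unimodular percolations through the walk $X$, using the push-forward and pull-back machinery of \cref{prop:unimodularpush} together with the quasi-unimodularity of $(\operatorname{Tr}(X),\rho)$ given by \cref{prop:unimodular_trace} (whose integrability hypothesis is automatic from $(\#X^{-1}(\rho))^{-1}\leq 1$ and $\E[\deg(\rho)]<\infty$). Recall that hyperfiniteness of a (quasi-)unimodular random rooted graph is witnessed by sequences of finite-component (quasi-)unimodular edge-percolations whose expected co-degree at the root tends to zero. I would also first record the (routine) upgrade of \cref{prop:unimodularpush} from marked \emph{sets} to general unimodular edge-decorations, since the mass-transport and time-reversal computations in its proof go through verbatim.

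For the forward direction, let $\omega$ be such a witness on $(T,o)$. The na\"ive choice $\tilde\omega:=X(\omega)$ is problematic because distinct $\omega$-components of $T$ can have overlapping images in $\operatorname{Tr}(X)$, potentially creating infinite components of $\tilde\omega$. Transience of $X$ is what saves the construction: for each $y\in V(\operatorname{Tr}(X))$, at most $\#X^{-1}(y)<\infty$ different $\omega$-components meet the fiber over $y$, so the ``intersection graph'' of images of $\omega$-components is locally finite. The plan is to sparsify $\omega$ (for instance, by intersecting it with a unimodular ``rare'' subset of $E(T)$ chosen using extra IID randomness together with the fibers of $X$) so that the intersection graph of images has a.s.\ finite clusters while the expected co-degree at $o$ increases only by a controlled amount. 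Pushing forward this modified witness gives a finite-component percolation on $\operatorname{Tr}(X)$; its joint quasi-unimodularity with $(\operatorname{Tr}(X),\rho)$ comes from the decorated version of \cref{prop:unimodularpush} item~2.

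For the converse direction, let $\eta$ be a witness on $(\operatorname{Tr}(X),\rho)$ and set $\tilde\eta:=X^{-1}(\eta)\subseteq E(T)$. Each component of $\tilde\eta$ is contained in $X^{-1}$ of a single finite $\eta$-component, hence a.s.\ finite by transience; the expected co-degree at $o$ is controlled by that of $\eta$ at $\rho$ through the weight of \cref{prop:unimodular_trace}. The key technical point is joint quasi-unimodularity of $(T,\tilde\eta,o)$: because $\eta$ depends on $X$ (and hence on $T$), \cref{prop:unimodularpush} item~1 does not apply directly. I would redo its mass-transport computation in the joint space $(G,T,X,\eta,\rho,o)$: the mass-transport identity on $(\operatorname{Tr}(X),\eta,\rho)$ (valid after biasing by the \cref{prop:unimodular_trace} weight) combined with the time-reversal identities for $X$ from the proof of \cref{prop:unimodularpush} yields the desired identity on $(T,\tilde\eta,o)$, with the two Radon--Nikodym factors cancelling.

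The main obstacles are (i) the ``cluster-merging'' issue in the forward direction, which requires a quantitative use of transience to suitably sparsify $\omega$ before pushing forward, and (ii) the bookkeeping of Radon--Nikodym weights in the converse direction's joint mass-transport computation. Both rely essentially on transience of $X$ ensuring that fibers $X^{-1}(y)$ are a.s.\ finite; I expect (ii) to be mechanical once set up correctly, while (i) is the subtler point.
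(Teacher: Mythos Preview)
Your approach is genuinely different from the paper's, and the forward direction contains a real gap.

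The paper does not transport witnessing percolations through $X$ at all. Instead it proves that $(T,o)$ and the unimodularly re-weighted trace $(H,\rho_H)$ are \emph{coupling equivalent} in the sense of \cite{unimodular2}: one builds a unimodular random rooted graph $(F,o)$ on the vertex set of $T$ whose edge set is the disjoint union of $E(T)$ with the edges pulled back from $\operatorname{Tr}(X)$, and realises both $(T,o)$ and an isomorphic copy of $(H,\rho_H)$ (obtained by selecting one uniform representative in each fiber $X^{-1}(y)$) as unimodular subgraphs of $F$. Hyperfiniteness then transfers in \emph{both} directions simultaneously by the general fact (\cite[Theorem~8.1]{unimodular2}) that it is a coupling-equivalence invariant. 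This completely bypasses the cluster-merging and weight-bookkeeping issues you raise.

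Your backward direction is plausible and, as you say, largely mechanical once the joint mass-transport is written down carefully. The forward direction is where the proposal breaks. The sparsification step is carrying the entire argument and you have not said how to perform it. ``Intersecting $\omega$ with a rare unimodular edge set'' necessarily increases the expected co-degree at $o$; for a hyperfiniteness witness you need this increase to be negligible, while simultaneously forcing the intersection graph of $X$-images of $\omega$-components to have finite clusters. These requirements pull against each other, and transience of $X$ only tells you that the intersection graph is locally finite --- it gives no control on its component sizes, nor any reason to believe a small perturbation of $\omega$ makes them finite. Without a concrete construction here, together with a quantitative link between the co-degree of the sparsified $\omega$ at $o$ and the co-degree of its push-forward at $\rho$, the implication ``$(T,o)$ hyperfinite $\Rightarrow$ $(\operatorname{Tr}(X),\rho)$ hyperfinite'' is not established. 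The coupling-equivalence route avoids this precisely because it never pushes a percolation forward through $X$.
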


Together with \cite[Theorem 8.15]{AL07}, \cref{thm:hyperfinite} has the following immediate corollary, which resolves \cite[Conjecture 4.1]{MR2914859}. 

\begin{corollary}
Let $(G,\rho)$ be a bounded degree unimodular random rooted graph, and let $(T,o)$ be an independent ergodic unimodular random rooted tree. Let $X$ be a $T$-indexed walk in $G$ with $X(o)=\rho$, and let $\operatorname{Tr}(X)$ be the trace of $X$. Suppose that $X$ is almost surely transient.
  If $(T,o)$ is not hyperfinite then simple random walk on $\operatorname{Tr}(X)$ has positive speed almost surely.
\end{corollary}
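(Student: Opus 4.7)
The plan is to deduce this directly by chaining \cref{thm:hyperfinite} with \cite[Theorem 8.15]{AL07}, which asserts that simple random walk on a bounded-degree non-hyperfinite unimodular random rooted graph has positive speed almost surely.

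First I would verify the hypotheses of \cref{thm:hyperfinite}. Since $G$ has bounded degree and $\operatorname{Tr}(X)$ is a subgraph of $G$, the trace also has bounded degree, and in particular the integrability assumption $\E[\deg_G(\rho)] < \infty$ is automatic. By \cref{prop:unimodular_trace}, $(\operatorname{Tr}(X), \rho)$ is quasi-unimodular, so I can pass to a unimodular random rooted graph $(G', \rho')$ whose law is mutually absolutely continuous with that of $(\operatorname{Tr}(X), \rho)$ by biasing by the weight given in \cref{prop:unimodular_trace}. Since positive speed of simple random walk is an almost-sure event depending only on the underlying graph (not on the root), it will suffice to establish positive speed for $(G', \rho')$.

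Next I would apply \cref{thm:hyperfinite}: the non-hyperfiniteness of $(T, o)$ implies that $(\operatorname{Tr}(X), \rho)$ is non-hyperfinite, and since hyperfiniteness is invariant under passing to an equivalent unimodular law \cite[Theorem 8.5]{AL07}, $(G', \rho')$ is also non-hyperfinite. Then \cite[Theorem 8.15]{AL07}, applied to the bounded-degree non-hyperfinite unimodular random rooted graph $(G', \rho')$, yields that simple random walk on $G'$ has positive speed almost surely. Transferring back to $(\operatorname{Tr}(X), \rho)$ by absolute continuity completes the argument.

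The main point to watch, rather than a genuine obstacle, is the role of the ergodicity hypothesis on $(T, o)$: ergodicity is what ensures that non-hyperfiniteness of $(\operatorname{Tr}(X), \rho)$ (and hence of $(G', \rho')$) holds almost surely, rather than merely with positive probability, so that \cite[Theorem 8.15]{AL07} applies cleanly to the entire unimodular law. Both the passage between quasi-unimodular and unimodular laws and this ergodicity transfer are handled by the absolute continuity between the laws of $(\operatorname{Tr}(X), \rho)$ and $(G', \rho')$.
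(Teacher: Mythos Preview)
Your proposal is correct and matches the paper's own argument essentially line for line: the paper states the corollary as an immediate consequence of chaining \cref{thm:hyperfinite} with \cite[Theorem 8.15]{AL07}, passing through the biased unimodular law furnished by \cref{prop:unimodular_trace}, and notes (as you do) that the bounded-degree assumption is what allows \cite[Theorem 8.15]{AL07} to be applied and that ergodicity of $(T,o)$ is what rules out mixtures with a hyperfinite component. One minor phrasing point: hyperfiniteness is a property of the \emph{law} rather than an almost-sure event, so your sentence about non-hyperfiniteness ``holding almost surely'' is better read as ``every ergodic component of $(G',\rho')$ is non-hyperfinite''---but this is at the same level of informality as the paper itself.
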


Here, a unimodular random rooted graph is said to be \textbf{ergodic} if the probability that it belongs to any re-rooting invariant event is in $\{0,1\}$, or, equivalently, if its law is an extreme point of the convex set of unimodular probability measures on $\cG_\bullet$ \cite[Theorem 4.7]{AL07}. This assumption is required to rule out, say, the case that $T$ is equal to $\Z$ with probability $1/2$ and is a $3$-regular tree with probability $1/2$.
It is not too hard to see that if $\mu$ is an offspring distribution with $\overline{\mu}>1$ then a unimodular Galton-Watson tree with offspring distribution $\mu$ conditioned to be infinite is ergodic, see \cite{MR1336708}.
 (The result can also be  applied in the non-ergodic case by invoking the existence of the \emph{ergodic decomposition}.) Note that the assumption that $G$ has bounded degrees is
  needed to apply \cite[Theorem 8.15]{AL07} to the law of $(\operatorname{Tr}(X),\rho)$ biased by the weight $\deg_G(\rho)(\#X^{-1}(\rho))^{-1}$. (It is possible to weaken this assumption in various ways. In particular, it follows by well-known arguments that it suffices to assume that $G$ has at most exponential growth almost surely and that $\E[\deg_G(\rho)\deg_{\operatorname{Tr}(X)}(\rho)(\# X^{-1}(\rho))^{-1}]< \infty$. We do not pursue this here.)

\medskip
\noindent
\textbf{Soficity.}
Recall that every finite connected graph can be made into a unimodular random rooted graph by choosing the root uniformly at random.
  A unimodular random rooted graph $(G,\rho)$ is said to be \textbf{sofic} if there exists a sequence of almost surely finite unimodular random rooted graphs $(G_n,\rho_n)$ converging in distribution to $(G,\rho)$. It is a major open problem whether every unimodular random rooted graph is sofic \cite[Section 10]{AL07}; this is of particular interest when $G$ is the Cayley graph of a finitely generated group.

This problem is well-understood for unimodular random \emph{trees}. Indeed, it is known that every unimodular random rooted tree is not only sofic but \emph{strongly sofic}, which roughly means that if we decorate the vertices and edges of the tree in an arbitrary unimodular way then the resulting decorated tree remains sofic. This was first proven for Cayley graphs of free groups by Bowen \cite{Bowen03}, and was extended to arbitrary unimodular random rooted trees by Elek \cite{Elek10}; see also \cite{URT} for a more probabilistic approach. Strong soficity has better stability properties than soficity, and it can be deduced from these results that, roughly speaking, various unimodular random rooted graphs that can be equipped with some sort of tree structure are strongly sofic also. See \cite{ElekLipp10} and \cite[Theorem 2]{unimodular2} for precise results.

Using these ideas, it is quite straightforward to prove the following theorem, which answers positively \cite[Question 4.5]{MR2914859}. We say that a \emph{quasi-unimodular} random rooted graph is (strongly) sofic if some unimodular random rooted graph with equivalent law is (strongly) sofic. (Again, it can be proven that this does not depend on which equivalent law one chooses, but we will not need this.)

\begin{thm}
\label{thm:sofic}
Let $(G,\rho)$ be a unimodular random rooted graph, and let $(T,o)$ be an independent unimodular random rooted tree. Let $X$ be a $T$-indexed walk in $G$ with $X(o)=\rho$, and let $\operatorname{Tr}(X)$ be the trace of $X$. Suppose that $X$ is almost surely transient and that the integrability assumption $\E[\deg_G(\rho)]<\infty$ holds. Then $(\operatorname{Tr}(X),\rho)$ is strongly sofic.
\end{thm}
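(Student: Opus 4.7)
The plan is to deduce Theorem~\ref{thm:sofic} from Elek's theorem \cite{Elek10} that every unimodular random rooted tree is \emph{strongly sofic}, meaning that arbitrary unimodular decorations of such a tree admit finite unimodular decorated approximations converging in the local topology, together with a transfer principle such as \cite[Theorem 2]{unimodular2} which propagates strong soficity from a unimodular random rooted tree to auxiliary structures coupled with it in a jointly unimodular way. The key point is that $(\operatorname{Tr}(X),\rho)$ is naturally coupled to the tree $(T,o)$ through the walk $X$, and transience ensures this coupling is locally finite in a sense amenable to standard tree-based transfer arguments.

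Concretely, I would first pass to the unimodular representative of $(\operatorname{Tr}(X),\rho)$ obtained by biasing via the weight $\deg_G(\rho)(\#X^{-1}(\rho))^{-1}$ as in \cref{prop:unimodular_trace}, so that the joint object $(T,o,X,\operatorname{Tr}(X),\rho)$ is jointly unimodular. This joint object can be encoded as a unimodular decoration of the tree $(T,o)$: at each vertex $v\in V(T)$ one records $X(v)$ together with enough local data about $\operatorname{Tr}(X)$ near $X(v)$, including markers identifying the other members of the fibre $X^{-1}(X(v))\subset V(T)$. Transience guarantees that each such fibre is almost surely finite, so this really is a genuine decoration in a Polish label space. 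Applying Elek's theorem \cite{Elek10} to this decorated unimodular random tree yields a sequence $(T_n,o_n,\mathrm{dec}_n)$ of finite unimodular decorated trees converging in distribution to $(T,o,\mathrm{dec})$. The decoration on each $T_n$ specifies an equivalence relation and adjacency data, so we obtain a finite quotient graph $G_n$ rooted at $\rho_n:=[o_n]$ whose law, after the analogous bias correction, gives a sofic approximation to the unimodular representative of $(\operatorname{Tr}(X),\rho)$. Carrying along any further unimodular decoration of $(\operatorname{Tr}(X),\rho)$ throughout the argument (by appending it to the decoration of $T$) upgrades this to strong soficity.

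The main obstacle will be verifying that the quotient construction commutes with the local limit: one must show that local convergence of the decorations $(T_n,o_n,\mathrm{dec}_n)\to (T,o,\mathrm{dec})$ implies local convergence of the induced finite graphs $(G_n,\rho_n)$ to $(\operatorname{Tr}(X),\rho)$. Transience is essential here, since it guarantees that the $r$-ball in $\operatorname{Tr}(X)$ around $\rho$ is determined by the decoration on a finite (though random) neighbourhood in $T$ around $o$; combined with a truncation argument exploiting the finiteness of $\#X^{-1}(\rho)$, this reduces the convergence check to a finite-radius statement that is routine once the decoration is set up correctly. Rather than carrying out this verification from scratch, the cleanest execution is to invoke \cite[Theorem 2]{unimodular2} (or the related results of \cite{ElekLipp10}) directly: that theorem packages exactly this kind of tree-over-graph soficity transfer, and applies once we have exhibited the joint structure of $(T,o,X)$ as a unimodular decoration of $(T,o)$ from which $(\operatorname{Tr}(X),\rho)$ is measurably recoverable.
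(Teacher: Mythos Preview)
Your strategy is essentially the one the paper uses: reduce to Elek's strong soficity of unimodular trees by exhibiting a jointly unimodular structure linking $(T,o)$ to the (unimodularized) trace, then invoke a transfer result from \cite{unimodular2}. The paper phrases this via the notion of \emph{coupling equivalence}: rather than decorating $T$ abstractly, it builds an auxiliary unimodular graph $F$ on the vertex set of $T$ whose edge set is the disjoint union of $E(T)$ and the $X$-pullback of $E(\operatorname{Tr}(X))$, and then realizes a copy of the trace inside $F$ by choosing, for each $x\in\operatorname{Tr}(X)$, a uniformly random representative $\phi(x)\in X^{-1}(x)$ and letting $\omega_2$ be the $E_2$-subgraph induced on these representatives. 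The degree bias is applied to $(G,\rho)$ at the outset, and the $(\#X^{-1}(\rho))^{-1}$ factor arises naturally from conditioning on $o\in\Phi$. Once this unimodular coupling is in place, the paper invokes \cite[Proposition~3.12, Theorems~8.1 and~8.2]{unimodular2} rather than \cite[Theorem~2]{unimodular2}, which yields both soficity and the hyperfiniteness statement of \cref{thm:hyperfinite} simultaneously.

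The one place your sketch is vaguer than the paper is the step ``record markers identifying the other members of the fibre $X^{-1}(X(v))$'': this is not literally a Polish-valued vertex label, since the markers refer to other tree vertices. The paper's device of adding extra edges to form $F$, together with the random section $\phi$, is precisely what makes this rigorous and what makes the hypotheses of the cited transfer theorems directly checkable. Your alternative route of building finite quotients by hand and verifying local convergence would work in principle, but---as you note---the bookkeeping is delicate, and the coupling-equivalence formalism is the clean way to avoid it.
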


We now sketch a proof of \cref{thm:hyperfinite,thm:sofic}. In the interest of space we have refrained from giving a self-contained exposition; the reader may find it helpful to read \cite[Sections 3 and 8]{unimodular2} before returning to the proof below.

\begin{proof}[Sketch of proof of \cref{thm:hyperfinite,thm:sofic}]
Let $(T,o)$, $(G,\rho)$, and $X$ be as in the statement of the theorems, and let $(H,\rho_H)$ be a unimodular random rooted graph whose law is given by biasing the law of $(\operatorname{Tr}(X),\rho)$ by $\deg(\rho)(\#X^{-1}(\rho))^{-1}$. In order to prove both theorems, it suffices by \cite[Proposition 3.12, Theorem 8.1, and Theorem 8.2]{unimodular2} to prove that the unimodular random rooted graphs $(T,o)$ and $(H,\rho_H)$ are \textbf{coupling equivalent}. This means that there exists a random quadruple $(F,\omega_1,\omega_2,\rho_F)$ such that the following conditions hold:
\begin{enumerate}
 \item $(F,\rho_F)$ is a unimodular random rooted graph.
 \item $\omega_1$ and $\omega_2$ are random connected subgraphs of $F$, encoded as functions $\omega_i: V(F) \cup E(F) \to \{0,1\}$ such that $\omega_i(v)=1$ for every $v\in V(F)$ such that $\omega_i(e)=1$ for some edge $e$ incident to $v$. (In particular, these subgraphs need not be spanning.)
 \item The quadruple $(F,\omega_1,\omega_2,\rho_F)$ is unimodular in an appropriate sense. (That is, as a random element of the space of rooted graphs decorated by two subgraphs. This space carries a natural variant of the local topology, and unimodular random elements of it are defined as before.)
 \item The conditional distribution of $(\omega_1,\rho_F)$ given that $\omega_1(\rho_F)=1$ is equal to the distribution of $(T,o)$, and the conditional distribution of $(\omega_2,\rho_F)$ given that $\omega_2(\rho_F)=1$ is equal to the distribution of $(H,\rho_H)$.
\end{enumerate}
Such a quadruple $(F,\omega_1,\omega_2,\rho_F)$ is referred to as a \textbf{unimodular coupling} of $(T,o)$ and $(H,\rho_H)$. Coupling equivalence was introduced in \cite{unimodular2} and is closely related to the notion of \emph{measure equivalence} in group theory.

We now construct such a unimodular coupling. Let $(G',\rho')$ be a random rooted graph whose law is given by biasing the law of $(G,\rho)$ by $\deg(\rho)$. Let $X'$ be a $T$-indexed random walk on $G'$ with $X'(o)=\rho'$.
 Let $(F,o)$ be the random rooted graph with the same vertex set as $(T,o)$ and where the number of edges connecting two vertices $u$ and $v$ is equal to the number of edges connecting $u$ and $v$ in $T$ plus the number of edges connecting $X'(u)$ and $X'(v)$ in $\operatorname{Tr}(X)$. Thus, the edge set of $F$ can naturally be written as a disjoint union $E_1 \cup E_2$, where $E_1$ is equal to the edge set of $T$. We let $\omega_1$ be the subgraph of $F$ that contains every vertex and that contains exactly those edges of $F$ that belong to $E_1$. For each vertex $x$ in the trace of $X'$, let $\phi(x)$ be a uniformly random element of the finite set $(X')^{-1}(x)$, and let $\Phi$ be the set of vertices of $F$ that are equal to $\phi(x)$ for some $x$ in the trace of $X$. We let $\omega_2$ be the subgraph of $F$ with vertex set $\Phi$ and with edge set the set of edges that belong to $E_2$ and have both endpoints in $\Phi$. It follows by a similar proof to that of \cref{prop:unimodularpush} that $(F,\omega_1,\omega_2,o)$ is unimodular. Moreover, we trivially have that $(\omega_1,o)$ is equal to $(T,o)$, and can easily verify that the conditional distribution of $(\omega_2,o)$ given that $\omega_2(o)=1$ (i.e., that $o\in \Phi$) is equal to the distribution of $(H,\rho_H)$. Indeed, $\omega_2$ is clearly isomorphic to the trace of $X'$ and, since $\phi(\rho')$ is uniform on $(X')^{-1}(\rho')$, conditioning on $o \in \Phi$ has the same effect as biasing by $(\# (X')^{-1}(\rho'))^{-1}$; we omit the details.
\end{proof}



\section*{Acknowledgments}

We thank Itai Benjamini, Jonathan Hermon, Asaf Nachmias, and Elisabetta Candellero for useful discussions. In particular, we thank Asaf for discussions that led to a substantially simpler proof of \cref{thm:magic}. We also thank the anonymous referee for their careful reading and helpful suggestions.

 \setstretch{1}
 \footnotesize{
  \bibliographystyle{abbrv}
  \bibliography{unimodularthesis.bib}

\begin{thebibliography}{10}

\bibitem{MR857063}
M.~Aizenman and R.~Fern\'andez.
\newblock On the critical behavior of the magnetization in high-dimensional
  {I}sing models.
\newblock {\em J. Statist. Phys.}, 44(3-4):393--454, 1986.

\bibitem{AL07}
D.~Aldous and R.~Lyons.
\newblock Processes on unimodular random networks.
\newblock {\em Electron. J. Probab.}, 12:no. 54, 1454--1508, 2007.

\bibitem{unimodular2}
O.~Angel, T.~Hutchcroft, A.~Nachmias, and G.~Ray.
\newblock Hyperbolic and parabolic unimodular random maps.
\newblock {\em Geom. Funct. Anal.}, 28(4):879--942, 2018.

\bibitem{BJKS08}
M.~T. Barlow, A.~A. J{\'a}rai, T.~Kumagai, and G.~Slade.
\newblock Random walk on the incipient infinite cluster for oriented
  percolation in high dimensions.
\newblock {\em Comm. Math. Phys.}, 278(2):385--431, 2008.

\bibitem{BeCu12}
I.~Benjamini and N.~Curien.
\newblock Recurrence of the {$\Bbb Z^d$}-valued infinite snake via
  unimodularity.
\newblock {\em Electron. Commun. Probab.}, 17:no. 1, 10, 2012.

\bibitem{MR2308594}
I.~Benjamini, O.~Gurel-Gurevich, and R.~Lyons.
\newblock Recurrence of random walk traces.
\newblock {\em Ann. Probab.}, 35(2):732--738, 2007.

\bibitem{BLPS99}
I.~Benjamini, R.~Lyons, Y.~Peres, and O.~Schramm.
\newblock Group-invariant percolation on graphs.
\newblock {\em Geom. Funct. Anal.}, 9(1):29--66, 1999.

\bibitem{URT}
I.~Benjamini, R.~Lyons, and O.~Schramm.
\newblock Unimodular random trees.
\newblock {\em Ergodic Theory Dynam. Systems}, 35(2):359--373, 2015.

\bibitem{MR2914859}
I.~Benjamini and S.~M\"{u}ller.
\newblock On the trace of branching random walks.
\newblock {\em Groups Geom. Dyn.}, 6(2):231--247, 2012.

\bibitem{MR1258875}
I.~Benjamini and Y.~Peres.
\newblock Markov chains indexed by trees.
\newblock {\em Ann. Probab.}, 22(1):219--243, 1994.

\bibitem{MR1254826}
I.~Benjamini and Y.~Peres.
\newblock Tree-indexed random walks on groups and first passage percolation.
\newblock {\em Probab. Theory Related Fields}, 98(1):91--112, 1994.

\bibitem{bperc96}
I.~Benjamini and O.~Schramm.
\newblock Percolation beyond {$\bold Z^d$}, many questions and a few answers.
\newblock volume~1, pages no. 8, 71--82. 1996.

\bibitem{BeSc}
I.~Benjamini and O.~Schramm.
\newblock Recurrence of distributional limits of finite planar graphs.
\newblock {\em Electron. J. Probab.}, 6:no. 23, 13 pp. (electronic), 2001.

\bibitem{MR2426846}
D.~Bertacchi and F.~Zucca.
\newblock Critical behaviors and critical values of branching random walks on
  multigraphs.
\newblock {\em J. Appl. Probab.}, 45(2):481--497, 2008.

\bibitem{Bowen03}
L.~Bowen.
\newblock Periodicity and circle packings of the hyperbolic plane.
\newblock {\em Geom. Dedicata}, 102:213--236, 2003.

\bibitem{MR2946082}
E.~Candellero, L.~A. Gilch, and S.~M\"{u}ller.
\newblock Branching random walks on free products of groups.
\newblock {\em Proc. Lond. Math. Soc. (3)}, 104(6):1085--1120, 2012.

\bibitem{MR3333735}
E.~Candellero and M.~I. Roberts.
\newblock The number of ends of critical branching random walks.
\newblock {\em ALEA Lat. Am. J. Probab. Math. Stat.}, 12(1):55--67, 2015.

\bibitem{CurienNotes}
N.~Curien.
\newblock Random graphs: the local convergence point of view.
\newblock 2017.
\newblock Unpublished lecture notes. Available at
  \url{https://www.math.u-psud.fr/~curien/cours/cours-RG-V3.pdf}.

\bibitem{MR3983336}
M.~Damron, J.~Gravner, M.~Junge, H.~Lyu, and D.~Sivakoff.
\newblock Parking on transitive unimodular graphs.
\newblock {\em Ann. Appl. Probab.}, 29(4):2089--2113, 2019.

\bibitem{Elek10}
G.~Elek.
\newblock On the limit of large girth graph sequences.
\newblock {\em Combinatorica}, 30(5):553--563, 2010.

\bibitem{ElekLipp10}
G.~Elek and G.~Lippner.
\newblock Sofic equivalence relations.
\newblock {\em J. Funct. Anal.}, 258(5):1692--1708, 2010.

\bibitem{ErdTay60}
P.~Erd{\"{o}}s and S.~J. Taylor.
\newblock Some intersection properties of random walk paths.
\newblock {\em Acta Math. Acad. Sci. Hungar.}, 11:231--248, 1960.

\bibitem{MR3863918}
E.~Foxall, T.~Hutchcroft, and M.~Junge.
\newblock Coalescing random walk on unimodular graphs.
\newblock {\em Electron. Commun. Probab.}, 23:Paper No. 62, 10, 2018.

\bibitem{MR2284404}
N.~Gantert and S.~M\"{u}ller.
\newblock The critical branching {M}arkov chain is transient.
\newblock {\em Markov Process. Related Fields}, 12(4):805--814, 2006.

\bibitem{MR3644010}
L.~A. Gilch and S.~M\"{u}ller.
\newblock Ends of branching random walks on planar hyperbolic {C}ayley graphs.
\newblock In {\em Groups, graphs and random walks}, volume 436 of {\em London
  Math. Soc. Lecture Note Ser.}, pages 205--214. Cambridge Univ. Press,
  Cambridge, 2017.

\bibitem{MR3266996}
J.~T. Gill.
\newblock Doubling metric spaces are characterized by a lemma of {B}enjamini
  and {S}chramm.
\newblock {\em Proc. Amer. Math. Soc.}, 142(12):4291--4295, 2014.

\bibitem{GiRo13}
J.~T. Gill and S.~Rohde.
\newblock On the {R}iemann surface type of random planar maps.
\newblock {\em Rev. Mat. Iberoam.}, 29(3):1071--1090, 2013.

\bibitem{MR3194496}
S.~Gou\"ezel.
\newblock Local limit theorem for symmetric random walks in {G}romov-hyperbolic
  groups.
\newblock {\em J. Amer. Math. Soc.}, 27(3):893--928, 2014.

\bibitem{MR3087391}
S.~Gou\"ezel and S.~P. Lalley.
\newblock Random walks on co-compact {F}uchsian groups.
\newblock {\em Ann. Sci. \'Ec. Norm. Sup\'er. (4)}, 46(1):129--173 (2013),
  2013.

\bibitem{GN12}
O.~Gurel-Gurevich and A.~Nachmias.
\newblock Recurrence of planar graph limits.
\newblock {\em Ann. of Math. (2)}, 177(2):761--781, 2013.

\bibitem{MR2280297}
O.~H\"aggstr\"om and J.~Jonasson.
\newblock Uniqueness and non-uniqueness in percolation theory.
\newblock {\em Probab. Surv.}, 3:289--344, 2006.

\bibitem{MR1171762}
T.~Hara and G.~Slade.
\newblock Self-avoiding walk in five or more dimensions. {I}. {T}he critical
  behaviour.
\newblock {\em Comm. Math. Phys.}, 147(1):101--136, 1992.

\bibitem{heydenreich2015progress}
M.~Heydenreich and R.~van~der Hofstad.
\newblock {\em Progress in high-dimensional percolation and random graphs}.
\newblock CRM Short Courses. Springer, Cham; Centre de Recherches
  Math\'{e}matiques, Montreal, QC, 2017.

\bibitem{MR1736590}
I.~Hueter and S.~P. Lalley.
\newblock Anisotropic branching random walks on homogeneous trees.
\newblock {\em Probab. Theory Related Fields}, 116(1):57--88, 2000.

\bibitem{hutchcroft2018universality}
T.~Hutchcroft.
\newblock Universality of high-dimensional spanning forests and sandpiles.
\newblock {\em Probability Theory and Related Fields}, pages 1--65, 2018.

\bibitem{1804.10191}
T.~Hutchcroft.
\newblock Percolation on hyperbolic graphs.
\newblock {\em Geom. Funct. Anal.}, 29(3):766--810, 2019.

\bibitem{HutPe2015a}
T.~Hutchcroft and Y.~Peres.
\newblock Collisions of random walks in reversible random graphs.
\newblock {\em Electron. Commun. Probab.}, 20:6 pp., 2015.

\bibitem{MR3231996}
A.~A. J\'{a}rai and A.~Nachmias.
\newblock Electrical resistance of the low dimensional critical branching
  random walk.
\newblock {\em Comm. Math. Phys.}, 331(1):67--109, 2014.

\bibitem{MR3880017}
A.~Khezeli.
\newblock Shift-coupling of random rooted graphs and networks.
\newblock In {\em Unimodularity in randomly generated graphs}, volume 719 of
  {\em Contemp. Math.}, pages 175--211. Amer. Math. Soc., Providence, RI, 2018.

\bibitem{MR1452555}
S.~P. Lalley and T.~Sellke.
\newblock Hyperbolic branching {B}rownian motion.
\newblock {\em Probab. Theory Related Fields}, 108(2):171--192, 1997.

\bibitem{MR1336708}
R.~Lyons, R.~Pemantle, and Y.~Peres.
\newblock Ergodic theory on {G}alton-{W}atson trees: speed of random walk and
  dimension of harmonic measure.
\newblock {\em Ergodic Theory Dynam. Systems}, 15(3):593--619, 1995.

\bibitem{LP:book}
R.~Lyons and Y.~Peres.
\newblock {\em Probability on Trees and Networks}, volume~42 of {\em Cambridge
  Series in Statistical and Probabilistic Mathematics}.
\newblock Cambridge University Press, New York, 2016.
\newblock Available at \url{http://pages.iu.edu/~rdlyons/}.

\bibitem{MR3297773}
S.~M\"{u}ller.
\newblock Interacting growth processes and invariant percolation.
\newblock {\em Ann. Appl. Probab.}, 25(1):268--286, 2015.

\bibitem{AsafBook}
A.~Nachmias.
\newblock Planar maps, random walks, and circle packing.
\newblock {\em {\'E}cole d'{\'E}t{\'e} de Probabilit{\'e}s de Saint-Flour
  XLVIII}, 2020.
\newblock To appear. Preliminary version available at
  \url{https://arxiv.org/abs/1812.11224}.

\bibitem{peres2004scaling}
Y.~Peres and D.~Revelle.
\newblock Scaling limits of the uniform spanning tree and loop-erased random
  walk on finite graphs.
\newblock unpublished, 2004.

\bibitem{MR1851386}
A.~Sakai.
\newblock Mean-field critical behavior for the contact process.
\newblock {\em J. Statist. Phys.}, 104(1-2):111--143, 2001.

\bibitem{MR1082868}
P.~M. Soardi and W.~Woess.
\newblock Amenability, unimodularity, and the spectral radius of random walks
  on infinite graphs.
\newblock {\em Math. Z.}, 205(3):471--486, 1990.

\bibitem{Woess}
W.~Woess.
\newblock {\em Random walks on infinite graphs and groups}, volume 138 of {\em
  Cambridge Tracts in Mathematics}.
\newblock Cambridge University Press, Cambridge, 2000.

\end{thebibliography}
  }

\end{document}